\newtheorem{theorem}{Theorem}
\newtheorem{lemma}[theorem]{Lemma}
\theoremstyle{definition}
\let\oldmarginpar\marginpar
\renewcommand\marginpar[1]{\-\oldmarginpar[\raggedleft\footnotesize #1]%
{\raggedright\footnotesize #1}}
 \def\a{{\alpha}}
 \def\b{{\beta}}
 \def\l{{\lambda}}
 \newcommand{\nbd}{\ensuremath{\mbox{N}}} 
\title{Decomposition of a complete bipartite multigraph into arbitrary cycle sizes}
\author{John Asplund\\
\href{emailto:jasplund@daltonstate.edu}{jasplund@daltonstate.edu}\\
Dalton State College\\
Department of Technology and Mathematics\\
Dalton, GA 30720, USA\\
\\
Joe Chaffee\\
\href{joseph.chaffee@kp.org}{joseph.chaffee@kp.org}\\
Kaiser Permanente\\
3495 Piedmont Road\\ 
Building 9\\
Atlanta, GA 30305\\
\\
James M. Hammer\\
\href{emailto:jmhammer@cedarcrest.edu}{jmhammer@cedarcrest.edu}\\
Cedar Crest College\\
Allentown, PA 18104, USA\\
}
\begin{document}
\maketitle
%

\abstract{
In a graph $G$, let $\mu_G(xy)$ denote the number of edges between $x$ and $y$ in $G$. 
Let $\lambda K_{v,u}$ be the graph $(V\cup U,E)$ with $|V|=v$, $|U|=u$, and 
\[
\mu_G(xy)=\begin{cases} 
\lambda &\mbox{if $x\in U$ and $y\in V$ or if $x\in V$ and $y\in U$}\\
0 &\mbox{otherwise.} \\
\end{cases}
\] 
Let $M$ be a sequence of non-negative integers $m_1,m_2,\ldots,m_n$. An $(M)$-cycle decomposition of a graph $G$ is a partition of the edge set into cycles of lengths $m_1,m_2,\ldots,m_n$. In this paper, we establish necessary and sufficient conditions for the existence of an $(M)$-cycle decomposition of $\lambda K_{v,u}$.
}

\section{Introduction}
Let $G=(V(G),E(G))$ be a graph.  If $\{x,y\}\in E(G)$, we write $x\sim y$.  A graph $G$ is said to be even or odd if the degree of each vertex in $G$ is even or odd respectively. 
A $1$-factor in a graph $G$ is a subset $E\subseteq E(G)$ such that for each $x\in V(G)$, $x$ is incident with the edge $e$ for precisely one $e\in E$. 
A path of length $m$, an $m$-path, is a sequence $[x_0,x_1, \ldots, x_m]$ of $m+1$ distinct vertices such that $x_i \sim x_{i+1}$ are the only edges for all $i\in \mathbb{Z}_m$ (so the path has $m$ edges).  A cycle of length $m$, an $m$-cycle, is a graph with $V(G)=\mathbb{Z}_m$ and $E(G)=\{\{i,i+1\}\,:\, i\in \mathbb{Z}_m\}$ where $i+1$ is reduced mod $m$ and denoted as $(x_0,x_1,\ldots,x_{m-1})$.  A cycle decomposition of a graph $G$ is a partition of the edge set of $G$ such that each element of the partition induces a cycle. 
In a cycle decomposition, not all cycles must have the same length, but in the case where all the cycles do have the same length, say $m$, it is common to say that there exists an $m$-cycle decomposition of $G$.  A cycle packing of a graph $G$ is a cycle decomposition of a subgraph $H$ of $G$ ($H=G$ is allowed).  The leave of this packing is defined to be $E(G)\setminus E(H)$.  It is also sometimes helpful to think of the leave as the subgraph of $G$ with vertex set $V(G)$ and edge set equal to $L$. It should cause no confusion to use both definitions, and we will use both at different times in the paper depending on which is more natural in the situation. 
If $M=m_1,m_2,\ldots,m_t$ is a sequence of integers and there is a cycle packing whose partition contains $t$ elements and the $i^{th}$ element induces a cycle of length $m_i$, then, for notational convenience, we will call this cycle packing an $(M)$-cycle packing. 
Let $\mu_G(xy)$ be the number of edges that join $x$ and $y$ in $G$ and $\mu_G=\max(\{\mu_G(xy)\mid x,y \in V(G)\})$.
For notational convenience, for a sequence of integers $M$, define $\nu_k(M)$ to be the number of times $k$ appears in $M$.

Let $\lambda K_{v,u}$ be the graph $(V\cup U,E)$ with $|V|=v$, $|U|=u$, and 
\[
\mu_G(xy)=\begin{cases} 
\lambda &\mbox{if $x\in U$ and $y\in V$ or if $x\in V$ and $y\in U$}\\
0 &\mbox{otherwise} \\
\end{cases}.
\] 
The main result of this paper establishes necessary and sufficient conditions for an $(M)$-cycle decomposition of $\l K_{v,u}$. 

Decomposition problems have been studied heavily in the last several decades. Most of the focus has been on showing that there exists an $(M)$-cycle decomposition of $K_v$. Many steps were made along the way, but the final pieces were put together by Bryant et al. in {\normalfont\cite{BryantHorsleyPettersson}}, thus solving this existence problem. Though there were many results with uniform cycle lengths on a complete bipartite graph before, much of the progress for showing the existence of an $(M)$-cycle decomposition of $K_{v,u}$ when the cycles are of even length was made by Horsley in {\normalfont\cite{Horsley}}. 
All of these results were for simple graphs; here we look at multigraphs. The first result below leads to a necessary condition for the existence of an $(M)$-cycle decomposition of $\l K_{u,v}$.

\begin{theorem}\label{anyGraphNecessary}
{\normalfont\cite{bryant2015decompositions}} Suppose $G$ is a graph in which $\mu_G(xy)$ is even for each pair of vertices $x$ and $y$, $\mathcal{D}$ is a cycle decomposition of $G$, and $C\in \mathcal{D}$. Then $|\mathcal{D}|\leq |E(G)|/2-|E(C)|+2$.
\end{theorem}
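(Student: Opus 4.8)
The plan is to recast the inequality as a statement about ``excess'' and then induct on $|E(G)|$. Since $\sum_{D\in\mathcal D}|E(D)|=|E(G)|$ and $2|\mathcal D|=\sum_{D\in\mathcal D}2$, the claim $|\mathcal D|\le |E(G)|/2-|E(C)|+2$ is equivalent to
\[
\sum_{D\in\mathcal D\setminus\{C\}}\bigl(|E(D)|-2\bigr)\ \ge\ |E(C)|-2 ,
\]
that is, the excess $|E(C)|-2$ of the distinguished cycle is at most the total excess of the remaining cycles. As every cycle has at least two edges, each summand is nonnegative, so the statement is trivial when $|E(C)|=2$; hence I may assume $|E(C)|\ge 3$. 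I may also assume $G$ is connected: if $G$ splits into components I apply the bound to the component containing $C$ and the trivial estimate that any cycle decomposition of a graph $H$ has at most $|E(H)|/2$ parts to each of the others, and these add up to the desired inequality.

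The engine is a parity observation coming from the hypothesis that every $\mu_G(xy)$ is even. Fix an edge $xy$ of $C$. Writing $\mu_G(xy)$ as the sum over $\mathcal D$ of the number of copies of $xy$ used by each cycle, and noting that a cycle uses $0$, $1$, or (only if it is the $2$-cycle on $\{x,y\}$) $2$ copies, evenness of $\mu_G(xy)$ forces the number of cycles using exactly one copy of $xy$ to be even. Since $C$ is one such cycle, another cycle passes through a parallel copy of $xy$; doing this for every edge of $C$ exhibits a parallel copy of $C$ distributed among $\mathcal D\setminus\{C\}$. This already disposes of the easy case: if $\mathcal D$ contains a $2$-cycle $B$ (necessarily $B\ne C$), I delete its two parallel edges to obtain an even graph $G^-$ with $|E(G^-)|=|E(G)|-2$ and decomposition $\mathcal D\setminus\{B\}$, and the inductive bound for $(G^-,C)$ gives $|\mathcal D|-1\le |E(G)|/2-1-|E(C)|+2$, which is exactly the required inequality.

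The remaining, genuinely harder, case is when $\mathcal D$ has no $2$-cycle, so every cycle has length at least $3$. Here I would select an edge $xy$ of $C$ together with a second cycle $D_0$ through a parallel copy of $xy$ (which exists by the parity observation), delete the two copies of $xy$, and re-decompose the even subgraph $W$ obtained by splicing $C$ and $D_0$ along $xy$. Concretely $W$ is the union of the $xy$-avoiding paths of $C$ and of $D_0$, an even graph on $|E(C)|+|E(D_0)|-2$ edges. Replacing $C$ and $D_0$ by a cycle decomposition of $W$ yields a decomposition $\mathcal D^-$ of a smaller even graph, to which induction applies. Chasing the arithmetic, the induction closes provided the re-decomposition of $W$ can be chosen so that its longest cycle $C^-$ and its number of parts $s$ satisfy $|E(C^-)|+s\ge |E(C)|+1$.

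The main obstacle is precisely this last inequality. When $C$ and $D_0$ meet only in $x$ and $y$, $W$ is a single cycle, $s=1$, and $|E(C^-)|=|E(C)|+|E(D_0)|-2\ge |E(C)|+1$ because $|E(D_0)|\ge 3$; so the difficulty is caused entirely by additional vertices shared by $C$ and $D_0$, at which any decomposition of $W$ into simple cycles is forced to fragment, shortening the longest cycle while only modestly increasing $s$. Controlling this trade-off is the crux: one must choose $xy$ and $D_0$ well, or exploit the further parallel copies that the evenness hypothesis supplies at the fragmenting vertices, so that the length lost to fragmentation is always compensated by the gain in the number of parts. I expect this case, rather than the set-up or the parity counting, to be where the real work lies.
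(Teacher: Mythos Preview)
The paper does not supply a proof of this theorem at all: it is quoted verbatim from \cite{bryant2015decompositions} and used only as a black box to derive necessary condition~$(c)$ in Theorem~\ref{Nconditions}. So there is no in-paper argument to compare against; I can only assess your proposal on its own merits.

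Your reformulation $\sum_{D\in\mathcal D\setminus\{C\}}(|E(D)|-2)\ge |E(C)|-2$ is correct, the reduction to the connected case is fine, and the parity observation (each edge of $C$ has a parallel copy lying in some other cycle of $\mathcal D$) is exactly the right structural fact. The case where $\mathcal D$ contains a $2$-cycle is dispatched cleanly.

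The gap you flag in the remaining case is, however, genuine and not merely cosmetic. Your inductive step requires that the spliced even graph $W=(C\cup D_0)-\{xy,xy\}$ admit a cycle decomposition with longest cycle $C^-$ and $s$ parts satisfying $|E(C^-)|+s\ge |E(C)|+1$. This can fail badly for an arbitrary choice of $xy$ and $D_0$. For a concrete obstruction, take $C=(v_0,v_1,\dots,v_{k-1})$ with $k$ large and let $D_0$ be a triangle on $v_0,v_1,v_j$ sharing the edge $v_0v_1$ with (a parallel copy of) $C$, where $j\approx k/2$. Then $W$ has a unique vertex of degree~$4$, namely $v_j$, and its only cycle decomposition is into two cycles of lengths $j$ and $k-j+1$; hence $s=2$ and $|E(C^-)|\approx k/2$, so $|E(C^-)|+s\approx k/2+2\ll k+1$. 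Thus the induction does not close with this choice, and since $D_0$ may be the \emph{only} cycle meeting a given edge of $C$, you cannot in general avoid such a configuration simply by picking a different witness.

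So the proposal, as written, is incomplete in a way that cannot be patched by a routine argument: you would need either a much more delicate choice of the edge $xy$ (exploiting parity at \emph{several} edges of $C$ simultaneously), or an entirely different inductive invariant. Since the paper itself defers to \cite{bryant2015decompositions}, consulting that source for the intended argument is the right next step.
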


\begin{theorem}\label{Nconditions}
Let $M=m_1,m_2,\ldots,m_t$ such that $m_1\leq m_2\leq\cdots \leq m_t$ and $m_i\equiv 0\pmod{2}$ for all $i\in\{1,2,\ldots,t\}$. If there exists an $(M)$-cycle decomposition of $\l K_{v,u}$ then all of the following hold:
\begin{itemize}
 \item[$(a)$] $m_t\leq 2\min(\{v,u\})$,
 \item[$(b)$] $\l v\equiv \l u\equiv 0\pmod{2}$,
 \item[$(c)$] $t\leq \frac{\l}{2}vu-m_t+2$ if $\l$ is even, and
 \item[$(d)$] $2\nu_2(M)\leq (\l-1)vu$ if $\l$ is odd.
\end{itemize}
\end{theorem}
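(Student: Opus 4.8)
The plan is to verify the four conditions largely independently, exploiting the bipartite structure of $\l K_{v,u}$ for $(a)$ and $(b)$, invoking Theorem~\ref{anyGraphNecessary} for $(c)$, and running an edge-counting parity argument for $(d)$. Throughout I fix an $(M)$-cycle decomposition $\mathcal{D}$ of $\l K_{v,u}$ and record the two basic quantities $|E(\l K_{v,u})|=\l vu$ and $|\mathcal{D}|=t$. For $(a)$, I would use that every cycle in the bipartite graph $\l K_{v,u}$ has vertices alternating between $V$ and $U$, so a cycle of even length $m$ uses exactly $m/2$ distinct vertices from $V$ and $m/2$ from $U$; applying this to the longest cycle, of length $m_t$, forces $m_t/2\le v$ and $m_t/2\le u$, i.e. $m_t\le 2\min(\{v,u\})$. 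For $(b)$, I would use that any graph admitting a cycle decomposition must be even, since each cycle contributes $0$ or $2$ to the degree of every vertex; as each vertex of $U$ has degree $\l v$ and each vertex of $V$ has degree $\l u$ in $\l K_{v,u}$, both $\l v$ and $\l u$ must be even.

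For $(c)$, in the case where $\l$ is even, I observe that $\mu_G(xy)\in\{0,\l\}$ is even for every pair $x,y$, so the hypothesis of Theorem~\ref{anyGraphNecessary} is met directly. Choosing $C\in\mathcal{D}$ to be the cycle of length $m_t$ (so $|E(C)|=m_t$) and substituting $|E(\l K_{v,u})|=\l vu$ and $|\mathcal{D}|=t$ into the conclusion $|\mathcal{D}|\le |E(G)|/2-|E(C)|+2$ yields exactly $t\le \frac{\l}{2}vu-m_t+2$.

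For $(d)$, in the case where $\l$ is odd, lies the only real work. I would fix a pair $x\in V$, $y\in U$ and consider how the $\l$ parallel edges between them are distributed among the cycles of $\mathcal{D}$. The key local observation is that a cycle of length at least $4$, having distinct vertices, can use \emph{at most one} of these $\l$ edges, whereas a $2$-cycle on the pair $\{x,y\}$ uses exactly two of them. Consequently the number of edges between $x$ and $y$ consumed by $2$-cycles is even and at most $\l$; since $\l$ is odd it is therefore at most $\l-1$. Finally I would sum over all $vu$ pairs: each $2$-cycle contributes its two edges to a single pair, so the grand total of edges consumed by $2$-cycles equals $2\nu_2(M)$, and the per-pair bound gives $2\nu_2(M)\le(\l-1)vu$.

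The main obstacle is the argument for $(d)$, and specifically the claim that a cycle of length at least $4$ cannot reuse a parallel edge between the same pair of vertices; this is what makes the count of $2$-cycle edges even on each pair, so that the parity clash with the odd value $\l$ survives the summation over pairs. I expect $(a)$, $(b)$, and $(c)$ to be essentially immediate given the bipartite structure and the cited theorem, while $(d)$ requires this clean separation between length-$2$ and longer cycles.
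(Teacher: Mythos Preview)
Your proposal is correct and follows essentially the same approach as the paper's proof: bipartite alternation for $(a)$, the even-degree requirement for $(b)$, a direct appeal to Theorem~\ref{anyGraphNecessary} for $(c)$, and the per-pair parity count for $(d)$. One small remark on $(d)$: the observation about cycles of length at least $4$ using at most one parallel edge is true but not actually needed---the evenness of the $2$-cycle contribution on each pair follows simply from each $2$-cycle using exactly two edges, independent of what the longer cycles do.
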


\begin{proof}
Since $\l K_{v,u}$ is bipartite, any cycle must alternate between the two parts. Therefore Condition $(a)$ is necessary. In any cycle decomposition, each vertex must have even degree and thus Condition $(b)$ is necessary. Condition $(c)$ follows directly from Theorem~{\normalfont\ref{anyGraphNecessary}}. 
If $\lambda$ is odd then at least one edge between every pair of vertices cannot be used in a $2$-cycle. This shows that Condition $(d)$ is necessary. 
\end{proof}

Notice that a necessary condition for an $(M)$-cycle decomposition of $G$ is that $G$ is even. If $G$ is odd and $M=m_1,\ldots,m_t$ is a sequence of integers then we will say there exists an $(M)^*$-cycle decomposition of $G$ if $E(G)$ can be partitioned into a $1$-factor and $t$ cycles, the $i^{th}$ inducing a cycle of length $m_i$ for $1\leq i\leq t$.

Section~{\normalfont\ref{edgeSwitching}} will provide a cycle switching method that will be vital in Section~{\normalfont\ref{joiningCycles}}. Section~{\normalfont\ref{joiningCycles}} is dedicated to showing that if we have a cycle decomposition of $\l K_{v,u}$ that contains an $m_1$- and $m_2$-cycle, then we can join these two cycles together to form an $(m_1+m_2)$-cycle under certain conditions. This result will be the key lemma for the main theorem found in Section~{\normalfont\ref{mainTheorem}}.


\section{Edge Switching}\label{edgeSwitching}


The idea for cycle switching was first introduced for complete graphs by Bryant, Horsley, and Maenhaut in {\normalfont\cite{BryantHorsleyMaenhaut}}. Later it was used in other decompositions of complete graphs  {\normalfont\cite{BryantHorsley,BryantHorsley2,BryantHorsley3}} which culminated in the solution for the Alspach conjecture in {\normalfont\cite{BryantHorsleyPettersson}}. Further, this cycle switching method was adapted for other graphs and other decompositions in {\normalfont\cite{B,BryantHorsleyMaenhautSmith,Horsley,Horsley2}}. We will extend the cycle switching method to general multigraphs. Let the \textit{open neighborhood} of $x$ in $G$ be defined as $N_G(x)=\{y\in V\,:\, x\sim y\}$. Two distinct vertices $\alpha$ and $\beta$ are said to be \emph{twin} if $\nbd_G(\alpha)\setminus \{\beta\}=\nbd_G(\beta)\setminus \{\alpha\}$.

Given the permutation $\pi$ of a set $V$ and a graph $G=(V(G)=V,E(G))$, let $\pi(G)$ be the graph defined as having vertex set $V(G)$ and edge set $\{\pi(x)\pi(y)\,:\, xy\in E(G)\}$. 
The following theorem is an edge-switching result for $\l K_n$ which we will then extend to a general multigraph.

\begin{theorem}\label{lemma2.1}
{\normalfont\cite{BryantHorsleyMaenhautSmith}} Let $\alpha,\beta\in V(\l K_n)$ and let $\sigma$ denote the permutation $(\alpha\beta)$. If $H$ is a subgraph of $\l K_n$, then we define $\sigma(H)$ to be the subgraph $H'$ of $\l K_n$ with $V(H')=\sigma(V(H))$ and $\mu_{H'}(\sigma(a)\sigma(b))=\mu_H(ab)$ for any distinct $a,b\in V(H)$. Let $n$ and $\l$ be positive integers, let $\mathcal{P}$ be an $(M)$-cycle or $(M)^*$-cycle packing of $\l K_n$ when $\l K_n$ is even or odd respectively, let $L$ be the leave of $\mathcal{P}$, let $\alpha$ and $\beta$ be distinct vertices of $L$, and let $\sigma$ be as defined above.
Let $E$ be a subset of $E(L)$ such that, for each vertex $u\in V(L)\setminus \{\alpha,\beta\}$, $E$ contains precisely $\max(\{0,\mu_L(u\alpha)-\mu_L(u\beta)\})$ edges with endpoints $u$ and $\alpha$, and precisely $\max(\{0,\mu_L(u\beta)-\mu_L(u\alpha)\})$ edges with endpoints $u$ and $\beta$ (so $E$ may contain multiple edges with the same endpoints), and $E$ contains no other edges. Then there exists a partition $\tau$ of $E$ into pairs such that for each pair $\{x_1y_1,x_2y_2\}\in\tau$, there exists an $(M)$-cycle or $(M)^*$-cycle packing $\mathcal{P}'$ of $\l K_n$ with leave $L'=(L-\{x_1y_1,x_2y_2\})\cup\{\sigma(x_1)\sigma(y_1),\sigma(x_2)\sigma(y_2)\}$.

Furthermore, $P'$ also has the following properties. If $\mathcal{P}=\{C_1,C_2,\ldots,C_t\}$ ($\l(n-1)$ even) or $\mathcal{P}=\{F,C_1,C_2,\ldots,C_t\}$ ($\l(n-1)$ odd), where $C_1,C_2,\ldots,C_t$ are cycles and $F$ is a perfect matching, then $\mathcal{P}'=\{C_1',C_2',\ldots,C_t'\}$ ($\l(n-1)$ even) or $\mathcal{P}'=\{F',C_1',C_2',\ldots,C_t'\}$ ($\l(n-1)$ odd) where for $i=1,2,\ldots,t$ $C_i'$ is a cycle of the same length as $C_i$ and $F'$ is a perfect matching such that 
\begin{itemize}
\item $F'=F$ or $F'=\pi(F)$;
\item for $i=1,2,\ldots,t,$ if neither $\alpha$ nor $\beta$ is in $V(C_i)$, then $C_i'=C_i$;
\item for $i=1,2,\ldots,t,$ if exactly one of $\alpha$ and $\beta$ is in $V(C_i)$, then $C_i'=C_i$ or $C_i=\pi(C_i)$; and
\item for $i=1,2,\ldots,t,$ if both $\alpha$ and $\beta$ are in $V(C_i)$, then $C_i'=Q_i\cup Q_i'$ where $Q_i=P_i$ or $\pi(P_i)$, $Q_i'=P_i'$ or $\pi(P_i')$, and where $P_i$ and $P_i'$ are the two paths from $\alpha$ to $\beta$ in $C_i$.
\end{itemize}
\end{theorem}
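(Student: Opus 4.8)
The plan is to realize the switched packing $\mathcal{P}'$ as a \emph{partial} application of the transposition $\sigma=(\alpha\beta)$. Applying $\sigma$ to all of $\lambda K_n$ trivially yields a packing $\sigma(\mathcal{P})$ with leave $\sigma(L)$, but this usually disturbs many leave edges at once, whereas we must change only two. So I would first isolate the available degrees of freedom, which are exactly the units named in the ``furthermore'' clause. A cycle meeting neither $\alpha$ nor $\beta$ is forced unchanged; a cycle meeting exactly one of them may be kept or replaced by its whole $\sigma$-image (again a cycle of the same length, as $\sigma$ fixes all its remaining vertices); and a cycle through both splits at $\alpha$ and $\beta$ into two internally disjoint $\alpha$--$\beta$ paths, each of which may be $\sigma$-imaged independently, the recombination being a cycle of the same length because $\sigma$ interchanges the two shared endpoints. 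The $1$-factor $F$, present exactly when $\lambda(n-1)$ is odd, is one more such unit. To \emph{toggle} a unit means to replace it by the relevant $\sigma$-image, which converts certain edges $u\alpha$ into $u\beta$ (or vice versa) and fixes all its other edges.

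I would next verify by a parity count that a pairing of $E$ can exist. Every cycle contributes even degree to each vertex it meets and $F$ contributes one edge at each of $\alpha,\beta$, so $\deg_L(\alpha)$ and $\deg_L(\beta)$ are both even. Since the $\alpha\beta$-multiplicity cancels, $\sum_u\bigl(\mu_L(u\alpha)-\mu_L(u\beta)\bigr)=\deg_L(\alpha)-\deg_L(\beta)$ is even, and as $|x|\equiv x \pmod 2$ we get that $|E|=\sum_u|\mu_L(u\alpha)-\mu_L(u\beta)|$ is even. Thus $E$ can be split into pairs at all; the content of the theorem is to choose the partition $\tau$ so every pair is simultaneously switchable.

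The engine for this is an auxiliary \emph{switching graph} $T$ on $V(\lambda K_n)\setminus\{\alpha,\beta\}$. Each toggleable unit becomes an edge of $T$ joining the two vertices at which a toggle moves an edge between the $\alpha$-side and the $\beta$-side: an $\alpha$-only cycle joins its two $\alpha$-neighbours, a $\beta$-only cycle its two $\beta$-neighbours, and an $\alpha$--$\beta$ path or the factor joins its $\alpha$-neighbour to its $\beta$-neighbour. Each leave edge at $\alpha$ or $\beta$ is a stub at its other endpoint. Labelling every end of every unit, and every stub, by the signed change a toggle makes to the number of $\alpha$-edges there ($-1$ where an $\alpha$-edge is destroyed, $+1$ where one is created), one checks that each vertex has total degree $\mu(u\alpha)+\mu(u\beta)=2\lambda$ in $T$, so all vertices are even.

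For a prescribed pair of excess stubs I would then route a \emph{sign-alternating} trail in $T$ between them --- one entering and leaving each interior vertex through ends of opposite sign --- and toggle every unit along it; the signed contributions then cancel at each interior vertex, so the only net effect on $\lambda K_n$ is that the two endpoint leave edges are replaced by their $\sigma$-images, which is exactly the claimed leave $L'$, with all cycle lengths preserved. The main obstacle is producing these trails: one must show that the even-degree/stub structure of $T$ together with the parity identity above forces the excess stubs to be linkable in pairs by sign-alternating trails whose toggles cancel internally, and that the entire family of switches can be chosen so as to partition $E$ and thereby define $\tau$, rather than treating a single pair in isolation. Checking that each recombined object is a genuine cycle of unchanged length and that no adjacency of $\lambda K_n$ ends up over- or under-used is the technical heart; the explicit case list of the ``furthermore'' clause is then read off from which units each trail toggled.
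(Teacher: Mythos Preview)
This theorem is not proved in the present paper at all: it is quoted verbatim from \cite{BryantHorsleyMaenhautSmith} and used as a black box (the paper's own contribution is Lemma~\ref{cycleSwitching}, which reduces the general-multigraph case back to Theorem~\ref{lemma2.1}). So there is no in-paper proof to compare against.

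That said, your sketch is heading in the right direction and is recognisably the strategy of the original source: isolate the toggleable pieces (cycles meeting one of $\alpha,\beta$; the two $\alpha$--$\beta$ arcs of a cycle meeting both; the $1$-factor), build an auxiliary multigraph whose edges record these pieces and whose half-edges record leave edges at $\alpha$ or $\beta$, observe that every vertex $u\neq\alpha,\beta$ has even degree $2\lambda$ there, and then use an Eulerian/alternating-trail argument to pair up the ``excess'' stubs. Where your write-up falls short of a proof is precisely the step you flag as ``the main obstacle'': you assert that sign-alternating trails linking the excess stubs exist and can be chosen to partition $E$, but you do not prove it. In the actual argument this is done by orienting the auxiliary multigraph so that in- and out-degrees match at every vertex (using the even-degree property), which forces the alternation, and then reading off the partition $\tau$ from that orientation; the ``furthermore'' clause is exactly the record of which pieces a given trail toggled. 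Until you supply that orientation/decomposition argument, what you have is a correct outline rather than a proof.
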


If one reads the proof of Theorem~3 in \cite{BryantHorsleyMaenhautSmith}, they will notice that the key to the proof is the existence of twin vertices, which of course exist in $\lambda K_n$.
The next lemma is the analogue of Theorem~{\normalfont\ref{lemma2.1}} for some multigraph $G$. 

\begin{lemma}\label{cycleSwitching}
Let $M$ be a sequence of integers, let $\mathcal{P}$ be an $(M)$-cycle or $(M)^*$-cycle packing of a multigraph $G$ when $G$ is even or odd respectively, let $L$ be the leave of $\mathcal{P}$, and let $\alpha$ and $\beta$ be twin vertices of $L$.
Let $A$ be a subset of $E(L)$ such that, for each vertex $u\in V(L)\setminus \{\alpha,\beta\}$, $A$ contains precisely $\max(\{0,\mu_L(u\alpha)-\mu_L(u\beta)\})$ edges with endpoints $u$ and $\alpha$, and precisely $\max(\{0,\mu_L(u\beta)-\mu_L(u\alpha)\})$ edges with endpoints $u$ and $\beta$ (so $A$ may contain multiple edges with the same endpoints), and $A$ contains no other edges. Let $\pi$ be a permutation of $V$. Then there exists a partition $\tau$ of $A$ into pairs such that for each pair $\{x_1y_1,x_2y_2\}$ in the partition of $A$, there exists an $(M)$-cycle or $(M)^*$-cycle packing $\mathcal{P}'$ of $G$ with leave $L'=(L-\{x_1y_1,x_2y_2\})\cup \{\pi(x_1)\pi(y_1),\pi(x_2)\pi(y_2)\}$. 


Furthermore, $\mathcal{P}'$ also has the following properties: If $\mathcal{P}=\{C_1,C_2,\ldots,C_t\}$ ($G$ even) or $\mathcal{P}=\{F,C_1,C_2,\ldots,C_t\}$ ($G$ odd), where $C_1,C_2,\ldots,C_t$ are cycles and $F$ is a $1$-factor, then $\mathcal{P}'=\{C_1',C_2',\ldots,C_t'\}$ ($|V(G)|$ even) or $\mathcal{P}'=\{F',C_1',C_2',\ldots,C_t'\}$ ($|V(G)|$ odd) where for $i=1,2,\ldots,t$, $C_i'$ is a cycle of the same length as $C_i$ and $F'$ is a perfect matching such that
\begin{itemize}
\item $F'=F$ or $F'=\pi(F)$;
\item for $i=1,2,\ldots,t,$ if neither $\alpha$ nor $\beta$ is in $V(C_i)$, then $C_i'=C_i$;
\item for $i=1,2,\ldots,t,$ if exactly one of $\alpha$ and $\beta$ is in $V(C_i)$, then $C_i'=C_i$ or $C_i'=\pi(C_i)$; and
\item for $i=1,2,\ldots,t,$ if both $\alpha$ and $\beta$ are in $V(C_i)$, then $C_i'=Q_i\cup Q_i'$ where $Q_i=P_i$ or $\pi(P_i)$, $Q_i'=P_i'$ or $\pi(P_i')$, and where $P_i$ and $P_i'$ are the two paths from $\alpha$ to $\beta$ in $C_i$.
\end{itemize}
\end{lemma}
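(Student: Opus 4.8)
The plan is to mirror the proof of Theorem~\ref{lemma2.1}, i.e. Theorem~3 of \cite{BryantHorsleyMaenhautSmith}, essentially line for line. That argument is purely combinatorial, and the only structural feature of the host graph $\lambda K_n$ that it exploits is that its two switching vertices are twin; every pair of vertices of $\lambda K_n$ has this property automatically, whereas here it is supplied by hypothesis. So the first task is to locate each point at which twinness is invoked and confirm that nothing stronger about $\lambda K_n$ (no uniform multiplicity, no vertex-transitivity) is actually used, after which $\lambda K_n$ may be replaced throughout by the arbitrary multigraph $G$.

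To describe the switch concretely I would pass to the multigraph obtained by amalgamating $\alpha$ and $\beta$ into a single vertex $w$ (the operative permutation $\pi$ being the interchange $(\alpha\,\beta)$, exactly the $\sigma$ of Theorem~\ref{lemma2.1}). Under this identification a cycle $C_i$ becomes a closed trail meeting $w$ zero, one, or two times according as $C_i$ contains $0$, $1$, or $2$ of $\alpha,\beta$; the $1$-factor $F$ (in the odd case) becomes a near-$1$-factor at $w$; and $L$ becomes a leave $\widetilde L$. A packing of the form asserted in the conclusion is precisely a way of splitting $w$ back into $\alpha$ and $\beta$, that is, of reassigning each edge-end at $w$ to one of the two vertices, subject to two requirements: (i) each closed trail must remain a cycle of its original length, which forces the two ends of a single-visit trail to the same side (yielding $C_i$ or $\pi(C_i)$) while letting the two visits of a double-visit trail be assigned independently (yielding the recombination $Q_i\cup Q_i'$ of the two paths between $\alpha$ and $\beta$); and (ii) for every neighbour $u$ the number of $u$-edges sent to $\alpha$ must stay equal to $\mu_G(u\alpha)$, so that $\mathcal{P}'\cup L'=G$ as multigraphs. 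The role of twinness is exactly to make (ii) achievable: whenever a reassignment wants to move a $u$-edge off $\alpha$ and onto $\beta$, the equality of the neighbourhoods of $\alpha$ and $\beta$ guarantees a $u$-edge at $\beta$ available to move the other way.

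The partition $\tau$ itself comes from an alternating-trail (Kempe-chain) argument on the edges incident with $\alpha$ and $\beta$. The set $A$ records, for each neighbour $u$, the $|\mu_L(u\alpha)-\mu_L(u\beta)|$ excess leave edges on the heavier side; these are the only ends whose reassignment alters the leave, and because the leave of an $(M)$- or $(M)^*$-packing is even at every vertex, a short parity count shows $|A|$ is even. Beginning at an edge of $A$ one follows a trail that alternately uses packing edges and their admissible reassignments, toggling the cycles (and possibly $F$) it meets; such a trail must terminate at a second edge of $A$. Each completed trail pairs two edges of $A$ and records exactly which cycles receive $\pi$ or a partial $\pi$ on one of their two paths, and reading off this single switch produces the required $\mathcal{P}'$ with leave $L'=(L-\{x_1y_1,x_2y_2\})\cup\{\pi(x_1)\pi(y_1),\pi(x_2)\pi(y_2)\}$ and with every $C_i'$ of the stated shape.

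The main obstacle is the bookkeeping behind this pairing: one must verify that the alternating trails can be chosen to exhaust $A$ in disjoint pairs, that toggling a cycle through both $\alpha$ and $\beta$ recombines its two paths into a single cycle of the same length rather than splitting it into two shorter closed trails, and that requirement (ii) is maintained throughout. Since \cite{BryantHorsleyMaenhautSmith} carries out precisely this bookkeeping using only twinness, the substantive content of the present lemma is the verification that the argument never appeals to any further property of $\lambda K_n$, so that it applies verbatim to $G$.
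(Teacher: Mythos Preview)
Your proposal is correct in substance: re-running the alternating-trail argument of \cite{BryantHorsleyMaenhautSmith} inside an arbitrary multigraph, checking at each step that only the twin condition on $\alpha$ and $\beta$ is invoked, will prove the lemma. The description of the amalgamation/splitting picture and the pairing of $A$ via Kempe-type trails is an accurate summary of that argument.

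The paper, however, takes a shorter route. Rather than redo the proof of Theorem~\ref{lemma2.1}, it \emph{reduces} to it: one views $\mathcal{P}$ as a packing of the complete multigraph $\mu_{G^\dagger}K_n$ on (essentially) $V(G)$, whose leave $H$ is the edge-disjoint union of $L$ together with all the ``missing'' edges $\mu_{G^\dagger}-\mu_{G^\dagger}(xy)$ copies of $xy$. Theorem~\ref{lemma2.1} applies to this packing as a black box. The twin hypothesis on $\alpha,\beta$ in $G$ is exactly what guarantees that the missing edges at $\alpha$ and at $\beta$ match up, so the set $E$ of Theorem~\ref{lemma2.1} computed from $H$ coincides with the set $A$ computed from $L$; hence every switch produced by Theorem~\ref{lemma2.1} moves only edges of $L$, and the resulting packing is again a packing of $G$.

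Your approach has the advantage of being self-contained and making explicit why twinness is the right hypothesis; the paper's approach is more economical, since it invokes Theorem~\ref{lemma2.1} once and only has to verify the single neighbourhood identity $(\nbd_H(\alpha)\cup\nbd_H(\beta))\setminus\bigl((\nbd_H(\alpha)\cap\nbd_H(\beta))\cup\{\alpha,\beta\}\bigr)=(\nbd_L(\alpha)\cup\nbd_L(\beta))\setminus\bigl((\nbd_L(\alpha)\cap\nbd_L(\beta))\cup\{\alpha,\beta\}\bigr)$ to transfer the conclusion back to $G$.
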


\begin{proof}
Let $G^\dagger$ be obtained from $G$ by adding an isolated vertex $\infty$ if both $|V(G)|$ is even and $G$ is even and let $G^\dagger=G$ otherwise. Note that we can consider $\mathcal{P}$ as an $(M)^*$-cycle packing of a complete graph with multiplicity $\mu_{G^\dagger}$ and $|V(G^\dagger)|$ vertices. The edge set of $G^\dagger$ is the same as $G$ in either case. Let the leave $H$ of $\mathcal{P}$ be the edge-disjoint union of $L$ and $\mu_{G^\dagger}-\mu_{G^{\dagger}}(xy)$ copies of each edge $xy\in E(G^\dagger)$. By applying Theorem~{\normalfont\ref{lemma2.1}}, we obtain a permutation $\pi$ such that for each $x\in (\nbd_L(\alpha)\cup \nbd_L(\beta))\setminus ((\nbd_L(\alpha)\cap \nbd_L(\beta))\cup\{\alpha,\beta\})$ there exists an $(M)$-cycle packing $\mathcal{P}'$ of $G^\dagger$ with leave $H'$ which differs from $H$ only in that each of the edges $\a x$, $\a \pi(x)$, $\b x$, $\b \pi(x)$ is an edge of $H'$ if and only if it is not an edge of $H$. Furthermore, by Theorem~{\normalfont\ref{lemma2.1}} edges are switched so that the following hold: 
\begin{itemize}
\item $F'=F$ or $F'=\pi(F)$;
\item for $i=1,2,\ldots,t,$ if neither $\alpha$ nor $\beta$ is in $V(C_i)$, then $C_i'=C_i$;
\item for $i=1,2,\ldots,t,$ if exactly one of $\alpha$ and $\beta$ is in $V(C_i)$, then $C_i'=C_i$ or $C_i=\pi(C_i)$; and
\item for $i=1,2,\ldots,t,$ if both $\alpha$ and $\beta$ are in $V(C_i)$, then $C_i'=Q_i\cup Q_i'$ where $Q_i=P_i$ or $\pi(P_i)$, $Q_i'=P_i'$ or $\pi(P_i')$, and where $P_i$ and $P_i'$ are the two paths from $\alpha$ to $\beta$ in $C_i$.
\end{itemize}
By the way we constructed $H$ and since $\nbd_G(\alpha)\setminus \{\beta\}=\nbd_G(\beta)\setminus\{\alpha\}$, it follows that
$$(\nbd_H(\alpha)\cup \nbd_H(\beta))\setminus ((\nbd_H(\alpha)\cap \nbd_H(\beta))\cup\{\alpha,\beta\}) = (\nbd_L(\alpha)\cup \nbd_L(\beta))\setminus ((\nbd_L(\alpha)\cap \nbd_L(\beta))\cup\{\alpha,\beta\}).$$
Let $L'$ be the leave on vertex set $V(G)$ and differ from $L$ only in that $\alpha x$, $\alpha \pi(x)$, $\beta x$, $\beta\pi(x)$ are edges in $L'$ if and only if they are not edges in $L$. Since $\alpha x, \alpha\pi(x), \beta x,\beta\pi(x)\in E(G)$, $H'=L'$ if $G=G^\dagger$ or $H'$ the edge-disjoint union of $L$ and $\mu_{G^\dagger}-\mu_{G^{\dagger}}(xy)$ copies of each edge $xy\in E(G^\dagger)$. Now looking at $L'$ as a leave of $G$, we have our result.
\end{proof}

Whenever Lemma~{\normalfont\ref{cycleSwitching}} is applied we have the ability to choose twin vertices $\alpha$ and $\beta$, and vertex $a$ such that $\mu_L(a\alpha)>\mu_L(a\beta)$ in which case $a\alpha\in E$. To simplify this concept, we say that an $(M)^*$-cycle packing $\mathcal{P}'$ was obtained from $\mathcal{P}$ when we perform an \textit{$(\alpha,\beta)$-switch} from an $(M)^*$-cycle packing $\mathcal{P}$ with \emph{origin} $a$ and \emph{terminus} $b$ (equivalently origin $b$ and terminus $a$). 
By applying Lemma~{\normalfont\ref{cycleSwitching}}, notice that if the leave $L$ is a simple graph for some packing $\mathcal{P}$ of a multigraph $G$, then the leave $L'$ of $\mathcal{P}'$ obtained by performing an $(\alpha,\beta)$-switch with origin $a$ will also be a simple graph. In fact, $L'$ differs from $L$ in Lemma~{\normalfont\ref{cycleSwitching}} only in that each of the edges $\alpha a$, $\alpha b$, $\beta a$, and $\beta b$ is an edge in $L'$ if and only if it is not an edge of $L$. So if $L_1$ is a leave for a packing $\mathcal{P}_1$ on a simple graph $G_1$, $L_2$ is the leave for a packing $\mathcal{P}_2$ on a multigraph $G_2$, $L_1=L_2$, and $\alpha$ and $\beta$ are twin, then the leaves $L_1'$ and $L_2'$ obtained by performing an $(\alpha,\beta)$-switch with origin $a$ and terminus $\pi(a)$ (where the vertices $\alpha$, $\beta$, and $a$ correspond to the same vertices in both $L_1$ and $L_2$) are equivalent if this switch has the same terminus in $L_1$ and $L_2$ (the terminus may not be the same vertex in $L_1$ as $L_2$ but we know that the choices for the terminus must be the same in $L_1$ and $L_2$). This observation allows us to use the lemmas proved in {\normalfont\cite{Horsley}} whose proofs only use results stemming directly from a simple graph version of the cycle switching method in Theorem~{\normalfont\ref{lemma2.1}} to prove results related to cycle decompositions of $\l K_{v,u}$ if the leave is a simple graph. That is, Theorems~{\normalfont\ref{lemma2.3H}}-{\normalfont\ref{lemma3.6H}} were all proven using Theorem 2.1 in \cite{BryantHorsley2} (but could have used Theorem~\ref{lemma2.1} since Theorem~\ref{lemma2.1} works on multigraphs and simple graphs while Theorem 2.1 in \cite{BryantHorsley2} only works on simple graphs and they do the same transformation to the leave), and so can be used to help prove multigraph analogues to Theorems~{\normalfont\ref{lemma2.3H}}-{\normalfont\ref{lemma3.6H}} whenever the leave is a simple graph. 

\section{Joining Cycles}\label{joiningCycles}

The aim of this section is to prove that we can modify a cycle decomposition to join two specific cycles together into a single cycle and all other cycles remain the same size. We begin with several theorems from Horsley {\normalfont\cite{Horsley}}, then prove several analogue theorems for the multigraph case before proving the main result of this section in Lemma~{\normalfont\ref{multiBipartitePacking}}. The proof of Lemma~\ref{multiBipartitePacking} is very similar to that found in Lemma~3.6 in {\normalfont\cite{Horsley}} and could be written almost verbatim into the proof of Lemma~\ref{multiBipartitePacking}. We will use this similarity to shorten or omit proofs when the proofs in {\normalfont\cite{Horsley}} are enough.
In {\normalfont\cite{Horsley}}, Lemmas~2.3, 2.5, 2.6, and 3.1--3.5 were required to prove Lemma~3.6 and so we will need the analogous versions of these lemmas for the multigraph case to prove Lemma~{\normalfont\ref{multiBipartitePacking}}. These lemmas from {\normalfont\cite{Horsley}} are provided below in order. 

We use the same definitions found in {\normalfont\cite{Horsley}} for the following terms. A \emph{chain} is a collection of cycles $A_1,A_2,\ldots,A_r$ such that
\begin{itemize}
\item $A_i$ is a cycle of length $a_i\geq 2$, and 
\item for $1\leq i<j\leq r$, $|V(A_i)\cap V(A_j)|=1$ if $j=i+1$ and $|V(A_i)\cap V(A_j)|=0$ otherwise.
\end{itemize}
The cycles $A_1$ and $A_r$ are called \emph{end cycles} and the vertex in $A_i\cap A_{i+1}$ is called the \emph{link vertex}. A chain containing $r$ cycles is called an \emph{$r$-chain}. A $2$-chain with cycles $C_1$ and $C_2$ will be denoted $C_1\cdot C_2$ or as a $(c_1,c_2)$-chain where $c_1$ and $c_2$ are the lengths of $C_1$ and $C_2$ respectively. 

A \emph{ring} is a collection of cycles $A_1,A_2,\ldots,A_r$ such that 
\begin{itemize}
\item $A_i$ is a cycle of length $a_i\geq 2$,
\item if $r\geq 3$ and $1\leq i<j\leq r$, then $|V(A_i)\cap V(A_j)|=1$ if $j=i+1$ or $(1,r)=(i,j)$, and $|V(A_i)\cap V(A_j)|=0$ otherwise, and
\item if $r=2$ then $|V(A_1)\cap V(A_2)|=2$.
\end{itemize}
Each cycle $A_1,A_2,\ldots,A_r$ is called a \emph{ring cycle}.

\begin{theorem}\label{lemma2.3H}
{\normalfont\cite{Horsley}} Let $a$ and $b$ be positive integers. Suppose that there exists an $(M)^*$-cycle packing $\mathcal{P}$ of $K_{a,b}$ with a leave of size $\ell$ whose only non-trivial component $H$ contains a path $P=[x_0,x_1,\ldots,x_t]$ of even length at least $4$ such that the edges in $E(H)\setminus E(P)$ form a path and such that $x_1x_t\not\in E(H)$. Let $S$ be the $(x_0,x_t)$-switch with origin $x_1$ (note that $x_0$ and $x_t$ are in the same part of $K_{a,b}$ since  $P$ is a path of even length) and let $\mathcal{P}'$ be the $(M)^*$-cycle packing of $K_{a,b}$ obtained from $\mathcal{P}$ by performing $S$. If $S$ does not have terminus $x_{t-1}$, then the leave of $\mathcal{P}'$ has a decomposition into a $t$-cycle and an $(\ell-t)$-cycle, and there are at least as many vertices of degree $4$ in the leave of $\mathcal{P}'$ as there are in the leave of $\mathcal{P}$. 
\end{theorem}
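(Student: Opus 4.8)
The plan is to realise the switch $S$ concretely as a four-edge toggle on the leave $L$ and then read the two cycles off directly. First I would record the structural facts. Because $P$ has even length, $x_0$ and $x_t$ lie in the same part of $K_{a,b}$, hence are twin, so $S$ is legitimate. Moreover the leave of an $(M)^*$-cycle packing of $K_{a,b}$ is an even graph: it is $K_{a,b}$ with a $1$-factor and some edge-disjoint cycles removed, and removing the $1$-factor flips every (odd) degree to even while the cycles preserve parities. Thus the nontrivial component $H$ is even. Writing $P' := H - E(P)$, deleting the path $P$ from the even graph $H$ leaves odd degree at exactly $x_0$ and $x_t$; since by hypothesis the remaining edges form a single path, $P'$ must be a path with endpoints precisely $x_0$ and $x_t$. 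In particular each of $x_0,x_t$ has exactly two neighbours in $L$: its $P$-neighbour ($x_1$, resp. $x_{t-1}$) and its $P'$-neighbour (call them $c$ and $b_0$).

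Next I would invoke Theorem~\ref{lemma2.1}. Since $x_0x_1\in E(L)$ while $x_1x_t\notin E(L)$, the vertex $x_1$ lies in $N_L(x_0)\setminus N_L(x_t)$ and so is a legitimate origin, and its terminus $b$ lies in $N_L(x_t)\setminus N_L(x_0)\subseteq\{x_{t-1},b_0\}$. The switch produces a packing $\mathcal{P}'$ whose leave $L'$ differs from $L$ only in the four toggled edges $x_0x_1$, $x_tx_1$, $x_0b$, $x_tb$; that is, $x_0x_1$ and $x_tb$ are deleted and $x_1x_t$ and $x_0b$ are inserted. Applying the hypothesis that $S$ does not terminate at $x_{t-1}$ now pins the terminus to $b=b_0$, the $P'$-neighbour of $x_t$. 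This is exactly the point at which the excluded case $b=x_{t-1}$ would instead delete the path-edge $x_{t-1}x_t$ and wreck the construction below.

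With $b=b_0\neq x_{t-1}$, the subpath $[x_1,x_2,\ldots,x_t]$ of $P$ survives the switch untouched, and together with the inserted edge $x_1x_t$ it closes into the $t$-cycle $C=(x_1,x_2,\ldots,x_t)$. The edges of $L'$ not used by $C$ are exactly $E(P')\setminus\{x_tb_0\}$ together with the inserted edge $x_0b_0$; but $E(P')\setminus\{x_tb_0\}$ is the $b_0$-to-$x_0$ subpath of $P'$, and adjoining $x_0b_0$ closes it into a cycle of length $(\ell-t-1)+1=\ell-t$. These two cycles are edge-disjoint and exhaust $L'$, giving the required decomposition. The degree-$4$ statement is then immediate: an $(x_0,x_t)$-switch merely trades one neighbour of $x_0$ for one neighbour of $x_t$ and vice versa, so it preserves the degree of every vertex of the leave, whence $L'$ has exactly as many degree-$4$ vertices as $L$.

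I expect the main obstacle to be the bookkeeping that correctly identifies the terminus and certifies that the leftover edges form a \emph{single} cycle rather than several. The two ingredients that make this work --- that $L$ is even (forcing $P'$ to run from $x_0$ to $x_t$) and that the terminus is not $x_{t-1}$ (so that $x_tb_0$, not the path-edge $x_{t-1}x_t$, is removed) --- must be combined carefully, and one must also dispose of degenerate possibilities, for instance $|E(P')|=2$, where $b_0$ and $c$ coincide and the only available terminus is $x_{t-1}$, so that the hypothesis is vacuous. A minor additional check is that $P$ and $P'$ may cross at interior degree-$4$ vertices; but since each of the two output cycles is traced along one of the simple paths $P$ or $P'$, such crossings do not prevent the cycles from being simple.
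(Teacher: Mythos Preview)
The paper does not prove Theorem~\ref{lemma2.3H} itself; it is quoted from \cite{Horsley}. The closest thing in the paper is the proof of the multigraph analogue, Lemma~\ref{lemma2.3}, and comparing your argument against that proof reveals a genuine gap.

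Your assertion that the terminus $b$ must lie in $N_L(x_t)\setminus N_L(x_0)$ is not justified by the switching lemma. The edge set $A$ contains not only $x_0x_1$ and the $x_t$-edges, but also $x_0c$ (where $c$ is the $P'$-neighbour of $x_0$) whenever $c\notin\{x_{t-1},b_0\}$; the pairing $\tau$ supplied by Theorem~\ref{lemma2.1} may perfectly well pair $x_0x_1$ with $x_0c$, in which case the terminus is $c$, not a neighbour of $x_t$. Writing $P'=[y_0,y_1,\dots,y_{\ell-t}]$ with $y_0=x_0$ and $y_{\ell-t}=x_t$, the paper's proof of Lemma~\ref{lemma2.3} explicitly splits into the two subcases ``terminus $=y_1$'' and ``terminus $=y_{\ell-t-1}$''; you treat only the second. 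In the first the four toggled edges are $x_0x_1$, $x_0y_1$ (removed) and $x_tx_1$, $x_ty_1$ (added), and the leave of $\mathcal{P}'$ decomposes into the $t$-cycle $(x_1,x_2,\dots,x_t)$ and the $(\ell-t)$-cycle $(y_1,y_2,\dots,y_{\ell-t})$.

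This missing case also breaks your degree argument. When the terminus is $y_1$, the switch removes two edges at $x_0$ and adds two at $x_t$, so $\deg_{L'}(x_0)=0$ while $\deg_{L'}(x_t)=4$; degrees are \emph{not} preserved vertex by vertex. The stated conclusion survives only because $x_t$ becomes a new degree-$4$ vertex, so the count of degree-$4$ vertices does not decrease, but the sentence ``it preserves the degree of every vertex of the leave'' is false as written.
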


\begin{theorem}\label{lemma2.5H}
{\normalfont\cite{Horsley}} Let $a$ and $b$ be positive integers. Suppose there exists an $(M)^*$-cycle packing of $K_{a,b}$ with a leave whose only non-trivial component is a $(p,q)$-chain. If $m$ is an even integer such that $p\leq m$ and $p+q-m\geq 4$, then there exist
\begin{enumerate}
\item[$(i)$] an $(M)^*$-cycle packing of $K_{a,b}$ with a leave whose only non-trivial component either has a decomposition into an $m$-cycle and a $(p+q-m)$-cycle, or is an $(m-p+2,2p+q-m-2)$-chain. 
\item[$(ii)$] an $(M)^*$-cycle packing of $K_{a,b}$ with a leave whose only non-trivial component either has a decomposition into an $m$-cycle and a $(p+q-m)$-cycle, or is an $(m-p+4,2p+q-m-4)$-chain.
\end{enumerate}
\end{theorem}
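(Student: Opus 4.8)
The plan is to obtain both statements as single applications of the switching result Theorem~\ref{lemma2.3H}, choosing the underlying path so that its ``good'' outcome yields the desired $m$-cycle / $(p+q-m)$-cycle decomposition, while its excluded ``bad'' outcome (terminus $x_{t-1}$) leaves exactly the advertised chain. Write the link vertex of the $(p,q)$-chain as $z$ and label its two cycles $C_1=(z,v_1,\dots,v_{p-1})$ and $C_2=(z,w_1,\dots,w_{q-1})$, so that the only non-trivial component $H$ of the leave has $z$ as its unique degree-$4$ vertex and $\ell=p+q$ edges. Since $K_{a,b}$ is simple and bipartite, every cycle length is even and at least $4$; in particular $p,q,m\geq 4$. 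If $m=p$ the first alternative is simply the original $(p,q)$-chain read as a $p$-cycle together with a $q$-cycle, so there is nothing to prove, and I may assume $m\geq p+2$.

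For part $(i)$ I would take the path
\[
P=[v_{p-1},v_{p-2},\dots,v_1,z,w_1,\dots,w_{m-p+1}],
\]
which uses $k=p-1$ edges of $C_1$ and $j=m-p+1$ edges of $C_2$ and hence has even length $t=m$. The complement $E(H)\setminus E(P)$ consists of the single unused arc of $C_1$ together with the unused arc of $C_2$, which meet only at $z$ and therefore form one path from $v_{p-1}$ to $w_{m-p+1}$; the endpoints $v_{p-1}$ and $w_{m-p+1}$ lie in the same part (both have odd index), and since $v_{p-2}\neq z$ we have $x_1x_t=v_{p-2}w_{m-p+1}\notin E(H)$. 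Thus the hypotheses of Theorem~\ref{lemma2.3H} hold for the $(v_{p-1},w_{m-p+1})$-switch with origin $v_{p-2}$, and if this switch does \emph{not} have terminus $x_{t-1}=w_{m-p}$ it produces a leave whose non-trivial component decomposes into an $m$-cycle and a $(p+q-m)$-cycle, giving the first alternative. Part $(ii)$ is identical except that I begin the path one vertex earlier, at $v_{p-2}$, so that $k=p-2$ and $j=m-p+2$; the length is again $m$ and all three hypotheses hold by the same checks.

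The remaining work, and the main obstacle, is the excluded ``bad'' terminus case, on which Theorem~\ref{lemma2.3H} is silent. Here I would argue directly from the switch described after Lemma~\ref{cycleSwitching}: performing the $(v_k,w_j)$-switch with origin $v_{k-1}$ and terminus $w_{j-1}$ toggles precisely the four edges $v_kv_{k-1}$, $v_kw_{j-1}$, $w_jv_{k-1}$, $w_jw_{j-1}$, that is, it deletes $v_{k-1}v_k$ and $w_{j-1}w_j$ and inserts $v_{k-1}w_j$ and $v_kw_{j-1}$. Retracing the two resulting closed trails through $z$ shows that the non-trivial component is again a $2$-chain with link vertex $z$, whose cycle lengths are $2k+q-m$ and $p+m-2k$. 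For $k=p-1$ these are $2p+q-m-2$ and $m-p+2$, producing the $(m-p+2,\,2p+q-m-2)$-chain of $(i)$; for $k=p-2$ they are $2p+q-m-4$ and $m-p+4$, producing the chain of $(ii)$.

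The only delicate points are then bookkeeping ones: confirming that the two retraced cycles meet \emph{only} at $z$ (so the component is genuinely a $2$-chain, not two disjoint cycles nor a single cycle), and checking that under the standing hypotheses $p\leq m\leq p+q-4$ and $m\geq p+2$ all four cycle lengths produced are even and at least $4$, so that each describes a legitimate configuration in the simple graph $K_{a,b}$; the degenerate value $m=p$ has already been absorbed into the first alternative above. None of this requires more than the edge count $\ell=p+q$ and the parities forced by the bipartition, so once the bad-terminus toggle is analyzed the two statements follow immediately.
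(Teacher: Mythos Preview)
The paper does not give its own proof of this statement; Theorem~\ref{lemma2.5H} is quoted verbatim from Horsley's paper \cite{Horsley} and simply cited. So there is no in-paper argument to compare against.

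Your argument is correct and is essentially the intended one. You choose a path $P$ of length $m$ through the link vertex $z$, invoke Theorem~\ref{lemma2.3H} for the $(x_0,x_t)$-switch with origin $x_1$, obtain the $m$-cycle/$(p+q-m)$-cycle decomposition whenever the terminus avoids $x_{t-1}$, and in the remaining ``bad'' case compute the four-edge toggle directly to exhibit the advertised chain. Starting the path at $v_{p-1}$ versus $v_{p-2}$ is exactly what produces the two parameter pairs $(m-p+2,\,2p+q-m-2)$ and $(m-p+4,\,2p+q-m-4)$. Your side checks --- that $E(H)\setminus E(P)$ is a single path, that $x_0,x_t$ lie in the same part, that $x_1x_t\notin E(H)$, that the two retraced cycles meet only at $z$, and that all resulting cycle lengths are even and at least $4$ under $p\le m\le p+q-4$ --- all go through; the degenerate case $m=p$ is correctly absorbed into the first alternative. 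This is precisely the mechanism for which Theorem~\ref{lemma2.3H} was designed, and the specific chain parameters in the statement are the fingerprint of this exact switch, so your route is almost certainly the same as Horsley's.
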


\begin{theorem}\label{lemma2.6H}
{\normalfont\cite{Horsley}} Let $a$ and $b$ be positive integers. Suppose there exists an $(M)^*$-cycle packing of $K_{a,b}$ with a leave whose only non-trivial component is a $(p,q)$-chain. If $m_1$ and $m_2$ are even integers such that $m_1,m_2\geq 4$ and $m_1+m_2=p+q$, then there exists an $(M,m_1,m_2)^*$-cycle decomposition of $K_{a,b}$. 
\end{theorem}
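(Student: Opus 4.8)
The plan is to reduce the decomposition of a $(p,q)$-chain into an $(m_1,m_2)$-pair of cycles to the two structural outcomes produced by Theorem~\ref{lemma2.5H}, and then to iterate. Assume without loss of generality that $m_1\le m_2$; since $m_1,m_2\ge 4$ and $m_1+m_2=p+q$, we may also assume $p\le q$ so that $p\le \tfrac{p+q}{2}\le m_2$, and we take $m=m_2$ in the application of Theorem~\ref{lemma2.5H}. The hypotheses of that theorem require $p\le m$ and $p+q-m\ge 4$; the first holds since $m=m_2\ge \tfrac{p+q}{2}\ge p$, and the second holds since $p+q-m=m_1\ge 4$. Applying Theorem~\ref{lemma2.5H}(i) (or (ii)) therefore yields an $(M)^*$-cycle packing whose only non-trivial leave component \emph{either} decomposes into an $m_2$-cycle and an $(p+q-m_2)$-cycle, i.e.\ an $m_1$-cycle, \emph{or} is a shorter chain whose two cycle lengths again sum to $p+q$.

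In the first case we are immediately done: the leave decomposes into an $m_1$-cycle and an $m_2$-cycle, and adjoining these two cycles to the packing $\mathcal{P}$ gives the desired $(M,m_1,m_2)^*$-cycle decomposition of $K_{a,b}$. The substance of the proof is the second case, where we are handed a new chain. The key observation is that the new chains delivered by Theorem~\ref{lemma2.5H}(i) and (ii), namely the $(m-p+2,2p+q-m-2)$-chain and the $(m-p+4,2p+q-m-4)$-chain, have their shorter cycle length strictly larger than before: with $m=m_2$ the new first parameter is $m_2-p+2$ or $m_2-p+4$, which exceeds the old minimum length once $p$ is the smaller of the two original parameters. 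Thus each application of Theorem~\ref{lemma2.5H} either finishes the argument or produces a chain that is measurably ``closer'' to the target lengths while preserving the total $p+q$. The plan is therefore to induct on a suitable monovariant --- for instance the gap $m_2-(\text{shorter chain length})$, or equivalently the number of steps needed to push the shorter cycle length up to $m_1$ --- and to argue that after finitely many applications the first (success) case must occur.

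The main obstacle will be making the inductive bookkeeping airtight: I must verify that at every intermediate chain the hypotheses $p'\le m_2$ and $p'+q'-m_2=m_1\ge 4$ continue to hold, so that Theorem~\ref{lemma2.5H} can legitimately be reapplied, and that the choice between parts (i) and (ii) can always be made so as to \emph{hit} the target length $m_1$ (equivalently $m_2$) exactly rather than overshooting it. Because the two parts increase the shorter length by $2$ and by $4$ respectively, availability of both step sizes should let me reach any even target of the correct parity; the careful point is a parity-and-reachability argument showing that some combination of $+2$ and $+4$ steps lands exactly on $m_1$. I expect this reachability check, together with confirming that the total chain length and the single non-trivial component structure are maintained throughout, to be the crux; once it is in place the theorem follows by finite descent on the monovariant.
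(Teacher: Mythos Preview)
Your overall strategy---iterate Theorem~\ref{lemma2.5H} and control the process with a monovariant---is the right one and is essentially Horsley's approach (note that the present paper only cites this theorem and does not reprove it). But the particular monovariant you write down does not work, and the inequality you use to justify it is false.

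You claim that, with $p\le q$ and $m=m_2$, the new first parameter $m_2-p+2$ (from part~(i)) exceeds the old shorter length $p$. That would need $m_2>2p-2$, which does not follow from $p\le q$: the latter only gives $2p\le m_1+m_2$, and since $m_1\ge 4$ this is strictly weaker. A concrete failure: take $m_1=4$, $m_2=12$ and start from the $(8,8)$-chain. With $p=8$ and $m=12$, part~(i) produces (when it does not immediately succeed) a $(6,10)$-chain, whose shorter length $6$ is \emph{smaller} than before; part~(ii) returns $(8,8)$. So neither option increases the shorter length, and your proposed monovariant $m_2-(\text{shorter length})$ can go up. The picture of ``$+2$ or $+4$ steps on the shorter length'' is therefore not accurate, and the reachability argument you sketch cannot be carried out as stated.

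The repair is minor but essential: do not track the shorter length; track the \emph{designated first coordinate} produced by Theorem~\ref{lemma2.5H}, and alternate (i) then (ii). From a $(p,q)$-chain with $p\le m_2$ and $p\ne m_2$, applying (i) with $m=m_2$ yields either success or the chain $(m_2-p+2,\,p+m_1-2)$; applying (ii) with $m=m_2$ to \emph{that} chain, using its first coordinate $p'=m_2-p+2\le m_2$, yields either success or the chain $(p+2,\,q-2)$. Thus one double step either finishes or increases the tracked coordinate by exactly $2$, and you induct on $m_2-p$. All side conditions (both chain parts stay $\ge 4$, the new first coordinate stays $\le m_2$) are routine given $p,q,m_1,m_2\ge 4$ and $p<m_2$. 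With this corrected bookkeeping your plan goes through.
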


\begin{theorem}\label{lemma3.1H}
{\normalfont\cite{Horsley}} Let $a$ and $b$ be positive integers. Suppose there exists an $(M)^*$-cycle packing of $K_{a,b}$ where $a\leq b$, with a leave of size $\ell$, where $\ell\leq 2a+2$ if $a<b$ and $\ell\leq 2a$ if $a=b$, with only one non-trivial component $H$. If $m_1$ and $m_2$ are even integers such that $m_1,m_2\geq 4$ and either
\begin{itemize}
\item $H$ is a chain which has a decomposition into an $m_1$-path and an $m_2$-path; or
\item $H$ is a ring which has a decomposition into an $m_1$-path and an $m_2$-path;
\end{itemize}
then there exists an $(M,m_1,m_2)^*$-cycle decomposition of $K_{a,b}$.
\end{theorem}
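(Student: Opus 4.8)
The plan is to reduce the statement to a single application of Theorem~\ref{lemma2.3H}. Since $\mathcal{P}$ is an $(M)^*$-cycle packing whose leave has $H$ as its only non-trivial component, every leave edge lies in $H$, so $\ell=|E(H)|=m_1+m_2$ (the given $m_1$-path and $m_2$-path partition $E(H)$). Thus it suffices to use the switching machinery of Lemma~\ref{cycleSwitching} to pass to an $(M)^*$-cycle packing whose leave decomposes into an $m_1$-cycle together with an $m_2$-cycle; adjoining these two cycles to that packing then yields the desired $(M,m_1,m_2)^*$-cycle decomposition of $K_{a,b}$. Note that $m_1,m_2$ even with $m_1,m_2\geq 4$ already supplies all the numerical hypotheses needed by Theorems~\ref{lemma2.3H}, \ref{lemma2.5H}, and~\ref{lemma2.6H}.

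First I would treat the chain case. Writing the hypothesized decomposition of $H$ as an $m_1$-path $P$ and an $m_2$-path $P'$, I observe that this is precisely the configuration demanded by Theorem~\ref{lemma2.3H}: setting $P=[x_0,x_1,\ldots,x_{m_1}]$, the path $P$ has even length $m_1\geq 4$, and the remaining edges $E(H)\setminus E(P)=E(P')$ form a path. Taking $t=m_1$ and performing the $(x_0,x_{m_1})$-switch with origin $x_1$, Theorem~\ref{lemma2.3H} produces a leave that decomposes into an $m_1$-cycle and an $(\ell-m_1)=m_2$-cycle, completing this case. I have the freedom to let $P$ be either the $m_1$-path or the $m_2$-path, which gives two chances to meet the two side conditions of Theorem~\ref{lemma2.3H}, namely $x_1x_{m_1}\notin E(H)$ and that the switch does not have terminus $x_{m_1-1}$.

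Next I would reduce the ring case to the chain case. If $H$ is a ring, I would perform a single $(\alpha,\beta)$-switch, chosen at an appropriate link vertex, to ``open'' the ring into a chain while preserving both $|E(H)|$ and the property that the unique non-trivial component still decomposes into an $m_1$-path and an $m_2$-path; the chain argument above then finishes the proof. Throughout, I would check that each switch leaves a single non-trivial component and keeps its size within the bound $\ell\leq 2a+2$ (respectively $\ell\leq 2a$ when $a=b$), so that no hypothesis is violated at an intermediate stage.

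The main obstacle I expect is controlling the side conditions of the switch in Theorem~\ref{lemma2.3H}: guaranteeing the non-edge condition $x_1x_{m_1}\notin E(H)$ and, more delicately, ruling out the forbidden terminus $x_{m_1-1}$, since by Lemma~\ref{cycleSwitching} the terminus is not chosen freely but is constrained by the structure of the leave. The ring-opening switch is equally delicate, as I must ensure the opened structure still admits a two-path decomposition satisfying those hypotheses. When neither labelling of the two paths can be positioned so that both conditions hold, my fallback is to first normalize $H$ to a $(p,q)$-chain with $p+q=m_1+m_2$—adjusting the chain parameters with Theorem~\ref{lemma2.5H} as needed—and then split it into an $m_1$-cycle and an $m_2$-cycle directly via Theorem~\ref{lemma2.6H}.
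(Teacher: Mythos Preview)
Your high-level shape is right, but there is a genuine gap exactly at the obstacle you flag: the forbidden terminus. In the chain case, if the $(x_0,x_{m_1})$-switch with origin $x_1$ lands on terminus $x_{m_1-1}$, Theorem~\ref{lemma2.3H} yields nothing, and neither of your remedies closes the hole. Swapping the roles of the two paths gives a second attempt, but nothing rules out both attempts hitting the bad terminus simultaneously. Your fallback to Theorems~\ref{lemma2.5H}--\ref{lemma2.6H} requires the leave already to be a $(p,q)$-chain, i.e.\ a $2$-chain; the hypothesis only gives an $s$-chain or $s$-ring with $s$ arbitrary, and you have supplied no mechanism to collapse an $s$-chain to a $2$-chain before invoking those results.

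The paper's route (visible in the proof of the multigraph analogue, Lemma~\ref{lemma3.1}, which mirrors Horsley's argument) makes the bad-terminus outcome \emph{productive} rather than fatal by inducting on $s$. For an $s$-chain with $s\geq 3$, the side condition $x_1x_{m_1}\notin E(H)$ is automatic (the two path endpoints lie in distinct end cycles), and if the switch does land on $x_{m_1-1}$ one checks that the resulting leave is an $(s-1)$-ring that still decomposes into an $m_1$-path and an $m_2$-path, so the inductive hypothesis applies. For an $s$-ring, the size bound $\ell\leq 2a+2$ (resp.\ $\ell\leq 2a$) is what guarantees the existence of a degree-$0$ vertex $\beta$ twin to a chosen link vertex $\alpha$; an $(\alpha,\beta)$-switch then converts the ring into either an $s$-chain (handled as above) or an $(s-1)$-ring. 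The base case $s=2$ is precisely where Theorem~\ref{lemma2.6H} applies directly (for chains) or a single switch reduces to a $2$-chain (for rings). The missing ingredient in your proposal is this inductive architecture---in particular, the key observation that the bad terminus decreases $s$ and flips the chain/ring type, so that a well-founded induction disposes of it.
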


\begin{theorem}\label{lemma3.2H}
{\normalfont\cite{Horsley}} Let $a$ and $b$ be positive integers. Suppose that there exists an $(M)^*$-cycle packing of $K_{a,b}$ with a leave $L$ of size $\ell$ with $k$ non-trivial components such that exactly one vertex of $L$ has degree $4$ and every other vertex of $L$ has degree $2$ or degree $0$. If $R$ is one of the parts of $K_{a,b}$ and $m_1$ and $m_2$ are integers such that $m_1,m_2\geq k+1$ and $m_1+m_2=\ell$, then there exists an $(M)^*$-cycle packing of $K_{a,b}$ with a leave whose only non-trivial component is a chain which has a decomposition into an $m_1$-path and an $m_2$-path such that if $m_1,m_2\geq 3$ then at least one end vertex of the paths is in $R$.
\end{theorem}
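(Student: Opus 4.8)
The first thing I would record is the shape of $L$. Because $L$ is a leave, every vertex has even degree; with exactly one vertex $w$ of degree $4$ and all others of degree $2$ or $0$, the component containing $w$ is a connected Eulerian graph whose only branch vertex is $w$, i.e.\ two cycles meeting only at $w$ --- a figure-eight, which is exactly a $2$-chain. Every other non-trivial component has all degrees $2$ and so is a single cycle. Thus $L$ is the edge-disjoint union of one $2$-chain and $k-1$ disjoint cycles, with total edge count $\ell$. The plan is to induct on $k$, using the cycle switching of Lemma~\ref{cycleSwitching} together with the chain-manipulation results, Theorems~\ref{lemma2.3H} and \ref{lemma2.5H}, to fuse these $k$ components into a single chain and then split that chain into an $m_1$-path and an $m_2$-path, placing an endpoint in $R$.

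The hypothesis $m_1,m_2\geq k+1$ is explained by the target structure. In any chain the link vertices have degree $4$, and a degree-$4$ vertex cannot be an endpoint of either path in a path-decomposition: a path meets any vertex in at most two edges, so the only way to cover four edges with two paths is for the vertex to be internal to both. Hence each path must pass through every link vertex, so a chain with $j$ link vertices forces each path to have length at least $j+1$. The construction itself has two phases. In the first I fuse the $k$ components into a single chain: given a free cycle $C$ and the non-trivial component built so far, I perform an $(\alpha,\beta)$-switch with $\alpha,\beta$ in the same part of $K_{a,b}$ (hence twin), one on $C$ and one on the existing component. Each such switch drops the number of non-trivial components by one, either by merging $C$ into an existing cycle (no new link vertex) or by attaching $C$ at a fresh shared vertex (one new link vertex); after $k-1$ switches a single chain remains, with between $1$ and $k$ link vertices --- so $m_1,m_2\geq k+1$ is exactly what is needed in the worst case.

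Splitting the resulting chain into two paths is where Theorems~\ref{lemma2.3H} and \ref{lemma2.5H} do the work: \ref{lemma2.5H} lets me peel an $m$-cycle off a $2$-chain or re-chain it, and iterating reduces the task to the figure-eight (two cycles $C_1,C_2$ meeting at $w$). Writing $C_1=(w,u_1,\ldots,u_{p_1-1})$ and $C_2=(w,x_1,\ldots,x_{p_2-1})$, I choose endpoints $u_i$ and $x_j$, take the two arcs from $w$ to $u_i$ and to $x_j$ as one path and the two complementary arcs as the other; this decomposes the figure-eight into paths of lengths $i+j$ and $(p_1-i)+(p_2-j)$. As $(i,j)$ ranges over $1\leq i\leq p_1-1$, $1\leq j\leq p_2-1$ the first length attains every value in $\{2,\ldots,\ell-2\}$, so any admissible $m_1$ is realized. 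Since the two paths share the endpoints $u_i,x_j$, whose parts are governed by the parities of $i$ and $j$, and since any $m_1=i+j\geq 3$ can be written with more than one pair $(i,j)$ of opposite parity, I can steer an endpoint into $R$; the only cases forcing a unique $(i,j)$ give $m_1=2$ or $m_2=2$, which are exempt from the $R$-condition.

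The main obstacle is that Lemma~\ref{cycleSwitching} does not let me prescribe the \emph{terminus} of a switch: it guarantees a valid pairing of the switching set, but which vertex the chosen origin is paired with is only partially under my control. This is precisely the difficulty handled in Theorem~\ref{lemma2.3H} through the clause ``if $S$ does not have terminus $x_{t-1}$''. So the real work is to set up each fusing switch so that \emph{every} admissible terminus yields acceptable progress --- a genuine merge or a controlled link --- and, where a single bad terminus can occur, to show it can be avoided or repaired by a further switch. Alongside this I must carry the path-length budget and the part of a prospective endpoint through every switch, so that both $m_1+m_2=\ell$ and the endpoint-in-$R$ condition survive to the final split; the tightest cases, where $m_1$ or $m_2$ equals $k+1$ and one path is forced to be a shortest transversal of all $k$ link vertices, are where this bookkeeping is most delicate.
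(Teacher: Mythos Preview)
The paper does not prove Theorem~\ref{lemma3.2H}; it is quoted from \cite{Horsley} and used as a black box in Lemma~\ref{lemma3.2}, so there is no in-paper argument to compare against directly.

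What you have written is an outline, not a proof. You correctly identify the structure of $L$ (one $2$-chain plus $k-1$ disjoint cycles), correctly explain the threshold $m_1,m_2\geq k+1$, and correctly name the central obstacle --- that the terminus of a switch is not under your control. But you stop exactly where the work begins: your final paragraph says ``the real work is to set up each fusing switch so that every admissible terminus yields acceptable progress'' and then ends. Two concrete gaps remain. First, in Phase~1 you assert each fusion yields ``a genuine merge or a controlled link'' but never verify that the result is a \emph{chain} (rather than a ring or some other Eulerian graph), nor do you bound the number of link vertices; the condition $m_1,m_2\geq k+1$ only suffices if the final chain has at most $k$ links, and you must show the fusions never overshoot. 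Second, in Phase~2 you invoke Theorems~\ref{lemma2.3H} and \ref{lemma2.5H} to ``reduce to the figure-eight,'' but those results are stated for $2$-chains and produce cycle decompositions or new $2$-chains, not path decompositions of an $r$-chain with $r>2$; your figure-eight arithmetic then handles only the $2$-chain case, and the claim that every $m_1\geq 3$ admits a pair $(i,j)$ placing an endpoint in $R$ is not argued carefully when one of $p_1,p_2$ is small. The approach that actually works --- visible in the proof of the multigraph analogue, Lemma~\ref{lemma3.2}, in this paper --- does not separate the problem into ``fuse, then split.'' Instead it fuses one cycle at a time \emph{while maintaining an $(m_1-j)$-path/$(m_2-j)$-path decomposition of the growing chain throughout}, choosing each switch at the common endpoint of the two current paths so that, regardless of terminus, both paths are lengthened by one edge. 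That simultaneous-growth bookkeeping is the missing ingredient in your sketch.
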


\begin{theorem}\label{lemma3.3H}
{\normalfont\cite{Horsley}} Let $a$ and $b$ be positive integers. Suppose that there exists an $(M)^*$-cycle packing of $K_{a,b}$ with a leave $L$ of size $\ell$ with $k$ non-trivial components such that exactly one vertex of $L$ has degree $4$, and every other vertex of $L$ has degree $2$ or degree $0$. If $m_1$ and $m_2$ are even integers such that $m_1,m_2\geq \max(4,k+1)$ and $m_1+m_2=\ell$, then there is an $(M,m_1,m_2)^*$-cycle decomposition of $K_{a,b}$.
\end{theorem}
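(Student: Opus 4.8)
The plan is to reduce the statement to the two preceding lemmas: first invoke Theorem~\ref{lemma3.2H} to replace the leave by one whose only non-trivial component is a chain that splits into an $m_1$-path and an $m_2$-path, and then feed that chain into Theorem~\ref{lemma3.1H} to produce the decomposition. The genuine content is checking that the size hypothesis of Theorem~\ref{lemma3.1H} is automatically satisfied under the degree assumptions we are given.

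Since $K_{a,b}=K_{b,a}$ and the hypotheses on $L$ do not distinguish the two parts, I would first relabel so that $a\le b$, and write $A$ (of size $a$) and $B$ (of size $b$) for the two parts. Because $m_1,m_2\ge\max(4,k+1)\ge k+1$ and $m_1+m_2=\ell$, Theorem~\ref{lemma3.2H} (applied with $R$ taken to be either part) yields an $(M)^*$-cycle packing $\mathcal{P}'$ of $K_{a,b}$ whose leave $L'$ has a single non-trivial component $H$, where $H$ is a chain that decomposes into an $m_1$-path and an $m_2$-path. Since $\mathcal{P}$ and $\mathcal{P}'$ are both $(M)^*$-cycle packings of the same graph, they remove the same number of edges, so $|E(L')|=\ell$.

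Next I would verify the size bound required by Theorem~\ref{lemma3.1H}, namely $\ell\le 2a+2$ when $a<b$ and $\ell\le 2a$ when $a=b$. This follows by summing degrees on the part of $K_{a,b}$ that avoids the unique degree-$4$ vertex $w$. If $w$ lies in $B$, then every vertex of $A$ has degree at most $2$ in $L$, so $\ell=\sum_{x\in A}\deg_L(x)\le 2a$, which implies both bounds. If $w$ lies in $A$, then every vertex of $B$ has degree at most $2$, giving $\ell\le 2b$, while summing over $A$ gives $\ell\le 4+2(a-1)=2a+2$. When $a<b$ the latter yields $\ell\le 2a+2$, and when $a=b$ the bound $\ell\le 2b=2a$ gives $\ell\le 2a$. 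Thus the inequality holds in every case.

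Finally, $H$ is a chain that decomposes into an $m_1$-path and an $m_2$-path, the integers $m_1,m_2\ge 4$ are even, and $|E(L')|=\ell$ satisfies the size bound just established; hence Theorem~\ref{lemma3.1H} applies to $\mathcal{P}'$ and delivers an $(M,m_1,m_2)^*$-cycle decomposition of $K_{a,b}$, as desired. I expect the main (indeed the only nontrivial) obstacle to be the size check in the previous paragraph: one must notice that the single-degree-$4$ hypothesis forces $\ell$ to be small enough, by counting on the part that avoids $w$, so that Theorem~\ref{lemma3.1H} is legitimately applicable. Everything else is a direct application of the two earlier lemmas with the numerical hypotheses matched, namely $m_1,m_2\ge k+1$ for Theorem~\ref{lemma3.2H} and $m_1,m_2\ge 4$ even for Theorem~\ref{lemma3.1H}.
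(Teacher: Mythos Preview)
Your proof is correct and follows the same two-step strategy as the paper's proof of the analogous Lemma~\ref{lemma3.3}: apply Theorem~\ref{lemma3.2H} to obtain a chain decomposing into an $m_1$-path and an $m_2$-path, then feed this to Theorem~\ref{lemma3.1H}. Your explicit verification of the size bound $\ell\le 2a+2$ (resp.\ $\ell\le 2a$) via degree-counting on the part avoiding the degree-$4$ vertex is a detail the paper leaves implicit, but it is exactly the right check.
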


A vertex $x$ in a connected graph $G$ is said to be a \textit{cut vertex} if the removal of $x$ from $G$ makes the resulting graph disconnected.

\begin{theorem}\label{lemma3.4H}
{\normalfont\cite{Horsley}} Let $a$ and $b$ be positive integers. Suppose that there exists an $(M)^*$-cycle packing of $K_{a,b}$ with a leave $L$. If $u$ and $v$ are vertices in the same part of $K_{a,b}$ such that $\deg_L(u)>\deg_L(v)$, then there exists an $(M)^*$-cycle packing of $K_{a,b}$ with a leave $L'$ such that $\deg_{L'}(u)=\deg_L(u)-2$, $\deg_{L'}(v)=\deg_L(v)+2$, and $\deg_{L'}(x)=\deg_L(x)$ for all $x\in V(L)\setminus \{u,v\}$. Furthermore, this $L'$ also satisfies
\begin{enumerate}
\item[$(i)$] if $\deg_L(v)=0$ and $u$ is not a cut vertex of $L$, then $L'$ has the same number of non-trivial components as $L$; and
\item[$(ii)$] if $\deg_L(v)=0$ then either $L'$ has the same number of non-trivial components as $L$ or $L'$ has one more non-trivial component than $L$.
\end{enumerate}
\end{theorem}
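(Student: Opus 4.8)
The plan is to realize the degree shift as a single cycle switch. Since $u$ and $v$ lie in the same part of $K_{a,b}$ they have identical neighbourhoods in the whole graph, so they are twin vertices of $K_{a,b}$ and Lemma~\ref{cycleSwitching} applies with $\alpha=u$, $\beta=v$, and $\pi=(uv)$. Call a vertex $w\in V(L)\setminus\{u,v\}$ \emph{$u$-heavy} if $wu\in E(L)$ but $wv\notin E(L)$, and \emph{$v$-heavy} if $wv\in E(L)$ but $wu\notin E(L)$; let $d_u$ and $d_v$ be the numbers of such vertices. Because $u$ and $v$ are non-adjacent and their common neighbours cancel, counting gives $d_u-d_v=\deg_L(u)-\deg_L(v)>0$. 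The edge set $A$ of the switch consists precisely of the $d_u$ edges $wu$ with $w$ $u$-heavy together with the $d_v$ edges $wv$ with $w$ $v$-heavy, so every edge of $A$ meets exactly one of $u,v$.

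Next I would exploit the partition $\tau$ of $A$ into pairs supplied by Lemma~\ref{cycleSwitching}. In any partition of $A$ into pairs, writing $p_{uu},p_{uv},p_{vv}$ for the numbers of pairs with two $u$-edges, one of each, and two $v$-edges, we have $2p_{uu}+p_{uv}=d_u$ and $2p_{vv}+p_{uv}=d_v$, whence $p_{uu}-p_{vv}=\tfrac12(d_u-d_v)>0$ and so $p_{uu}\ge 1$. Thus $\tau$ contains a pair $\{au,bu\}$ with $a,b$ both $u$-heavy; equivalently there is a $(u,v)$-switch whose origin $a$ and terminus $b$ are both $u$-heavy. Performing it replaces $au,bu$ by $av,bv$ in the leave, so the resulting $L'$ satisfies $\deg_{L'}(u)=\deg_L(u)-2$, $\deg_{L'}(v)=\deg_L(v)+2$, and $\deg_{L'}(x)=\deg_L(x)$ for all other $x$, which is the main assertion.

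It remains to control the non-trivial components when $\deg_L(v)=0$. Here $v$ is isolated, so $d_v=0$, every edge of $A$ meets $u$, and the chosen switch deletes two edges $au,bu$ and inserts $av,bv$, routing them through the previously isolated vertex $v$. Let $K$ be the non-trivial component of $L$ containing $u$ (hence also $a$ and $b$); all other components are untouched, so it suffices to compare the non-trivial components of $K$ with those of $K'=(K-\{au,bu\})\cup\{av,bv\}$. Deleting $au,bu$ raises the component count of $K$ by at most two, while inserting $av,bv$ reunites the pieces containing $a$ and $b$ (through $v$) into one non-trivial piece, and a short case analysis then shows $K'$ has either one or two non-trivial components, giving (ii). For (i), if $u$ is not a cut vertex then $a$ and $b$ remain joined in $K-u$, hence in $K-\{au,bu\}$, so adding $av,bv$ merely attaches $v$ to this still-connected piece and leaves the component count unchanged. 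The delicate point—the step I expect to require the most care—is exactly this component bookkeeping: checking in each sub-case (according to whether $u,a,b$ become separated and whether $u$ is left isolated) that no second spurious component can arise, which is what pins the change at precisely $0$ or $+1$.
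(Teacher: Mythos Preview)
Your approach is correct and matches the paper's. Note that Theorem~\ref{lemma3.4H} is quoted from \cite{Horsley} and not re-proved here; the paper's own argument appears in the multigraph analogue, Lemma~\ref{lemma3.4}, where it is stated tersely: since $L$ is even one has $\deg_L(u)\ge \deg_L(v)+2$, and because $u,v$ are twins there is a $(u,v)$-switch whose origin and terminus are both neighbours of $u$ in $L$, after which the component properties follow from $L$ being even. Your proposal is exactly this argument with the details filled in: the counting $p_{uu}-p_{vv}=\tfrac12(d_u-d_v)\ge 1$ is the justification the paper omits for why such a switch exists, and your component bookkeeping in (i) and (ii) is what the paper's final sentence leaves implicit.
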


\begin{theorem}\label{lemma3.5H}
{\normalfont\cite{Horsley}} Let $a$ and $b$ be positive integers. For an $(M)^*$-cycle packing $\mathcal{P}$ of a graph $G$ let
$$d(\mathcal{P})=\frac{1}{2}\sum_{x\in D}(\deg_L(x)-2),$$
where $L$ is the leave of $\mathcal{P}$ and $D$ is the set of vertices of $L$ having degree at least $4$. 
Suppose that there exists an $(M)^*$-cycle packing $\mathcal{P}_0$ of $K_{a,b}$, where $a\leq b$, with a leave $L_0$ of size $\ell$, where $\ell\leq 2a+2$ if $a<b$ and $\ell\leq 2a$ if $a=b$, with $k_0$ non-trivial components such that $L_0$ has at least one vertex of degree at least $4$. Then, there exists an $(M)^*$-cycle packing of $K_{a,b}$ with a leave $L'$ such that exactly one vertex of $L'$ has degree $4$, every other vertex of $L'$ has degree $2$ or degree $0$, and $L'$ has at most $\min(k_0+d(\mathcal{P}_0)-1,\lfloor\frac{\ell}{4}\rfloor-1)$ non-trivial components.
\end{theorem}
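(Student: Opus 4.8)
The plan is to repeatedly apply the degree-balancing switch of Theorem~\ref{lemma3.4H} to funnel all of the excess degree of the leave onto a single vertex, stopping precisely when exactly one vertex of degree $4$ remains. Throughout, I would track two quantities: the parameter $d(\mathcal{P})$ and the number of non-trivial components of the leave. The key observation is that $d(\mathcal{P})=1$ is equivalent to the leave having exactly one vertex of degree $4$ with every other vertex of degree $0$ or $2$, since $\sum_{x\in D}(\deg_L(x)-2)=2$ with each term even and positive forces a single degree-$4$ vertex. Hence it suffices to drive $d$ down to $1$ while controlling the component count.

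First I would establish the decisive structural lemma: whenever $d(\mathcal{P})\geq 2$, some part of $K_{a,b}$ contains both a vertex of degree at least $4$ and a vertex of degree $0$. Writing $d_A,d_B$ for the contributions to $d(\mathcal{P})$ and $z_A,z_B$ for the numbers of degree-$0$ vertices in the two parts, summing degrees within each part (and using that the leave is an even bipartite graph, so $\ell$ is even) gives $d_A-z_A=\tfrac{\ell}{2}-a$ and $d_B-z_B=\tfrac{\ell}{2}-b$. If no part contained both a high-degree and a degree-$0$ vertex, then any part carrying high degree would have $z=0$; using $\ell\leq 2a+2$ (respectively $\ell\leq 2a$ when $a=b$) together with $b\geq a$ this forces $d_A\leq 1$ and $d_B\leq 0$, whence $d=d_A+d_B\leq 1$, a contradiction. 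This counting argument is the crux of the proof and the step I expect to demand the most care, since it is exactly where the size restriction $\ell\leq 2a+2$ is used.

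Granted the lemma, as long as $d\geq 2$ I would choose a vertex $u$ of degree at least $4$ and a vertex $v$ of degree $0$ in the same part and invoke Theorem~\ref{lemma3.4H} to move two units of degree from $u$ to $v$. A direct check shows this lowers $d(\mathcal{P})$ by exactly $1$: if $\deg_L(u)=4$ then $u$ leaves $D$, while if $\deg_L(u)\geq 6$ its contribution drops by one, and in either case $v$ becomes a harmless degree-$2$ vertex. Because $\deg_L(v)=0$, part~$(ii)$ of Theorem~\ref{lemma3.4H} applies and each such switch increases the number of non-trivial components by at most $1$. Iterating brings $d$ from $d(\mathcal{P}_0)$ down to $1$ in exactly $d(\mathcal{P}_0)-1$ switches, so the resulting leave $L'$ has the desired degree profile and at most $k_0+d(\mathcal{P}_0)-1$ non-trivial components.

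Finally I would verify the second half of the bound directly from the shape of $L'$. Its non-trivial components are cycles, each of length at least $4$ since $K_{a,b}$ is bipartite, together with the single component $H^*$ carrying the degree-$4$ vertex; as $H^*$ decomposes into two cycles meeting only at that vertex, $|E(H^*)|\geq 8$. Hence if $L'$ has $k'$ non-trivial components then $\ell\geq 8+4(k'-1)=4k'+4$, giving the integer bound $k'\leq\lfloor \ell/4\rfloor-1$. Combining the two estimates yields $k'\leq\min(k_0+d(\mathcal{P}_0)-1,\lfloor\ell/4\rfloor-1)$, as required.
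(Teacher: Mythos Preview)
Your approach is correct and is essentially the same as the one the paper uses for the multigraph analogue, Lemma~\ref{lemma3.5} (which in turn follows Horsley's original argument): iterate Theorem~\ref{lemma3.4H} exactly $d(\mathcal{P}_0)-1$ times, moving two units of degree from a vertex of degree at least $4$ onto a degree-$0$ vertex in the same part, so that $d$ drops by one each step while the component count rises by at most one, and then read off the $\lfloor \ell/4\rfloor-1$ bound from the cycle structure of the final leave. Your counting argument $d_A-z_A=\tfrac{\ell}{2}-a$, $d_B-z_B=\tfrac{\ell}{2}-b$ for locating a high-degree vertex and an isolated vertex in the same part is more explicit than the paper's one-line justification, but the underlying idea is identical.
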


\begin{theorem}\label{lemma3.6H}
{\normalfont\cite{Horsley}} Let $a$ and $b$ be positive integers. Suppose there exists an $(M,h,m,m')^*$-cycle decomposition of $K_{a,b}$ where $a\leq b$. If $m+m'\leq 3h$, $m+m'+h\leq 2a+2$ if $a<b$, and $m+m'+h\leq 2a$ if $a=b$, then there exists an $(M,h,m+m')^*$-cycle decomposition of $K_{a,b}$. 
\end{theorem}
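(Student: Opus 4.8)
The plan is to delete the three designated cycles and re-decompose the resulting leave into an $h$-cycle and an $(m+m')$-cycle using the leave-manipulation machinery of Theorems~\ref{lemma3.5H} and~\ref{lemma3.3H}. First I would remove $C_h$, $C_m$, and $C_{m'}$ from the given $(M,h,m,m')^*$-decomposition to obtain an $(M)^*$-cycle packing $\mathcal{P}_0$ of $K_{a,b}$ whose leave $L_0$ is the edge-disjoint union of these three cycles; it is simple and has size $\ell=h+m+m'$. The hypotheses guarantee $\ell\le 2a+2$ when $a<b$ and $\ell\le 2a$ when $a=b$, which is exactly the size bound required by Theorems~\ref{lemma3.5H} and~\ref{lemma3.3H}. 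Since splitting a leave with one degree-$4$ vertex into an $h$-cycle and an $(m+m')$-cycle via Theorem~\ref{lemma3.3H} produces precisely an $(M,h,m+m')^*$-decomposition, the whole problem reduces to steering $L_0$ into such a canonical leave.

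Next I would reduce $L_0$ to a leave with exactly one vertex of degree $4$. If the three cycles already share a vertex, then $L_0$ has a vertex of degree at least $4$ and Theorem~\ref{lemma3.5H} applies immediately. If instead the three cycles are pairwise vertex-disjoint, every vertex of $L_0$ has degree $0$ or $2$; here I note that the balancing operation of Theorem~\ref{lemma3.4H} keeps all degrees in $\{0,2\}$ and so can never create the needed degree-$4$ vertex, and so I would instead perform one genuine $(\alpha,\beta)$-switch from Lemma~\ref{cycleSwitching}, with $\alpha$ and $\beta$ in the same part of $K_{a,b}$ and lying on two different cycles, to force two of the cycles to meet at a common vertex while preserving the length of every cycle of $\mathcal{P}_0$ and keeping the leave simple; the bound $\ell\le 2a+2$ provides the room to do this. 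In either case Theorem~\ref{lemma3.5H} now yields an $(M)^*$-cycle packing with a leave $L'$ having exactly one vertex of degree $4$, all other vertices of degree $2$ or $0$, and at most $\min\bigl(k_0+d(\mathcal{P}_0)-1,\ \lfloor \ell/4\rfloor-1\bigr)$ non-trivial components, where $k_0$ and $d(\mathcal{P}_0)$ refer to the packing fed into the theorem.

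Finally I would invoke Theorem~\ref{lemma3.3H} on $L'$ with $m_1=h$ and $m_2=m+m'$: these are even, at least $4$, and sum to $\ell$, so the only thing to check is that the number $k$ of non-trivial components of $L'$ satisfies $h,m+m'\ge k+1$. For the $h$-side this is exactly where the hypothesis $m+m'\le 3h$ enters: it gives $\ell=h+m+m'\le 4h$, hence $\lfloor \ell/4\rfloor-1\le h-1$ and so $k\le h-1$. Theorem~\ref{lemma3.3H} then returns an $(M,h,m+m')^*$-cycle decomposition of $K_{a,b}$, as desired.

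The main obstacle will be this final component count, specifically the $(m+m')$-side of the inequality $k\le\min(h,m+m')-1$. When the three given cycles overlap heavily, $d(\mathcal{P}_0)$ and $k_0$ can be large, so one must rely on the $\lfloor\ell/4\rfloor$ bound; when $h$ is much larger than $m+m'$ the situation reverses and one must instead bound $d(\mathcal{P}_0)$ through the observation that overlaps among the three cycles are constrained by the shorter cycles. Pinning down, by a short case analysis on the overlap pattern, that $\min(k_0+d(\mathcal{P}_0)-1,\lfloor\ell/4\rfloor-1)\le\min(h,m+m')-1$ in every configuration, together with verifying that the seeding switch can always be carried out when the three cycles start out disjoint, is the delicate heart of the argument; the rest is simply assembling the cited leave-manipulation lemmas.
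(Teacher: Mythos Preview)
Your overall strategy---remove the three cycles, use Theorem~\ref{lemma3.5H} to reduce the leave to one having a single vertex of degree~$4$, then invoke Theorem~\ref{lemma3.3H} with $m_1=h$ and $m_2=m+m'$---is exactly the framework the paper points to (and that Horsley uses); the paper's own treatment of the multigraph analogue, Lemma~\ref{multiBipartitePacking}, says precisely ``use Lemmas~\ref{lemma3.3}, \ref{lemma3.4}, \ref{lemma3.5} as was done in \cite{Horsley}''. Your identification of the role of $m+m'\le 3h$ for the $h$-side of the component bound is also correct.

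That said, the two places you flag as ``the delicate heart'' are not just delicate---as written they contain real gaps. First, the seeding step: an $(\alpha,\beta)$-switch with $\alpha\in V(C_m)$ and $\beta\in V(C_h)$ (both of degree~$2$) does \emph{not} in general produce a vertex of degree~$4$. If the terminus is a neighbour of $\beta$, the two cycles merge into a single longer cycle and every vertex still has degree $0$ or $2$; only when the terminus is the other neighbour of $\alpha$ do you get $\deg(\beta)=4$. So one switch is not enough, and iterating can land you at a single $(h+m+m')$-cycle with no high-degree vertex at all, which Theorem~\ref{lemma3.5H} cannot process.

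Second, and more seriously, the inequality you hope to verify, $\min\bigl(k_0+d(\mathcal{P}_0)-1,\lfloor\ell/4\rfloor-1\bigr)\le m+m'-1$, is \emph{false} in general. Take $m=m'=4$, $h=100$, and arrange $V(C_m)\cup V(C_{m'})\subseteq V(C_h)$ with $V(C_m)\cap V(C_{m'})=\emptyset$ (edge-disjointness allows this in $K_{a,b}$). Then $k_0=1$, every vertex of $C_m\cup C_{m'}$ has degree~$4$, so $d=8=m+m'$, giving $k_0+d-1=8$; and $\lfloor\ell/4\rfloor-1=26$. Thus the $\min$ equals $8$, but you need $k\le m+m'-1=7$ to apply Theorem~\ref{lemma3.3H}. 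The crude bound from Theorem~\ref{lemma3.5H} is off by one here. Horsley's actual argument must therefore do something extra---most likely exploiting part~(i) of Theorem~\ref{lemma3.4H} (choosing non-cut vertices so the component count does not grow at every step), or handling the heavy-overlap configurations directly---and your sketch does not supply that idea.
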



This next lemma is the multigraph analogue to Theorem~{\normalfont\ref{lemma2.3H}}. Even though one of the assumptions in the proof of Theorem~{\normalfont\ref{lemma2.3H}} requires $v$ and $u$ to be even, this fact is only used to show that the degree of $x_0$ and $x_t$ in $H$ are both $2$ which we can do since $\l v\equiv \l u\equiv 0\pmod{2}$.

\begin{lemma}\label{lemma2.3}
Suppose there exists an $(M)^*$-cycle packing $\mathcal{P}$ of $\l K_{v,u}$ with a leave of size $\ell$ whose only non-trivial component $H$ contains a path $P=[x_0,x_1,\ldots,x_t]$ of even length at least $4$ such that the edges in $E(H)\setminus E(P)$ form a path and such that $x_1x_t\not\in E(H)$. Let $S$ be the $(x_0,x_t)$-switch with origin $x_1$ $($note that $x_0$ and $x_t$ are in the same part of $\l K_{v,u}$ since $P$ is a path of even length$)$ and let $\mathcal{P}'$ be the $(M)^*$-cycle packing of $\l K_{v,u}$ obtained from $\mathcal{P}$ by performing $S$. If $S$ does not have terminus $x_{t-1}$, then the leave of $\mathcal{P}'$ has a decomposition into a $t$-cycle and an $(\ell-t)$-cycle, and there are at least as many vertices of degree $4$ in the leave of $\mathcal{P}'$ as there are in the leave of $\mathcal{P}$.
\end{lemma}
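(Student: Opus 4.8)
The plan is to obtain this as the multigraph analogue of Horsley's Theorem~\ref{lemma2.3H}, invoking the transfer principle established at the end of Section~\ref{edgeSwitching}. The first observation is that the configuration described---the only non-trivial component $H$ of $L$ being the edge-disjoint union of the path $P$ and the path $Q := E(H)\setminus E(P)$, with $x_1 x_t \notin E(H)$---is exactly the simple-graph configuration of Horsley's lemma, so in particular $L$ is simple. Moreover $x_0$ and $x_t$ lie in the same part of $\lambda K_{v,u}$ (as $P$ has even length), and any two vertices in the same part of a complete bipartite multigraph are twin. Consequently the $(x_0,x_t)$-switch with origin $x_1$ carried out through Lemma~\ref{cycleSwitching} enacts precisely the same transformation on the simple leave $L$ as the corresponding switch does in $K_{v,u}$ under Theorem~\ref{lemma2.1}. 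Thus every step of the proof of Theorem~\ref{lemma2.3H} that depends only on the action of this switch on a simple leave transfers verbatim, and it suffices to reinstate the single place where Horsley's argument appeals to the parity of the part sizes.

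That single place is the claim $\deg_H(x_0) = \deg_H(x_t) = 2$, which I would establish directly by first showing every vertex of $L$ has even degree. Each cycle of the packing meets a vertex in an even number of edges; when $\lambda K_{v,u}$ is even we have $\lambda v \equiv \lambda u \equiv 0 \pmod 2$ and no $1$-factor is present, while when it is odd the discarded $1$-factor removes one edge at each vertex from an odd degree---so in either case $\deg_L$ is even everywhere. Since $H$ is the only non-trivial component, $\deg_L(x_0) = \deg_H(x_0) = \deg_P(x_0) + \deg_Q(x_0)$; here $\deg_P(x_0) = 1$ because $x_0$ is an end vertex of $P$, and $\deg_Q(x_0) \in \{0,1,2\}$ because $Q$ is a path. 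Evenness then pins $\deg_H(x_0)$ to $2$, and symmetrically $\deg_H(x_t) = 2$. In particular $x_0$ and $x_t$ are exactly the two end vertices of the path $Q$.

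With these degrees in hand, the effect of the switch can be read off. Writing $b$ for the terminus, the switch toggles precisely the four edges $x_0 x_1$, $x_1 x_t$, $x_t b$, $x_0 b$: it deletes $x_0 x_1$ and inserts $x_1 x_t$, which closes $[x_1, x_2, \ldots, x_t]$ into a $t$-cycle; and since $b \neq x_{t-1}$, the deleted edge $x_t b$ (which exists because $b$ is a valid terminus) is the unique edge of $Q$ at $x_t$, so deleting it and inserting $x_0 b$ closes the remainder of $Q$ into an $(\ell - t)$-cycle. These two cycles are edge-disjoint and exhaust $E(L')$, giving the desired decomposition. Finally, each of the four toggles removes one edge and adds one edge at each incident vertex, so the switch preserves all vertex degrees; hence the leave of $\mathcal{P}'$ has exactly, and in particular at least, as many vertices of degree $4$ as that of $\mathcal{P}$.

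The main obstacle is the transfer step itself: one must be certain that the leave genuinely remains simple and that the multigraph switch of Lemma~\ref{cycleSwitching} coincides edge-for-edge with the simple-graph switch underlying \cite{Horsley}, so that Horsley's proof may be cited rather than rebuilt. A secondary point to verify is that the hypotheses make the switch well-defined in the intended direction, namely that $x_1 x_0 \in E(L)$ while $x_1 x_t \notin E(L)$ (the latter from $x_1 x_t \notin E(H)$), which is what guarantees that the switch with origin $x_1$ removes $x_0 x_1$ and inserts $x_1 x_t$ as used above; the delicate work of locating the terminus, and the role of the condition that it is not $x_{t-1}$, is precisely what is already carried out in \cite{Horsley}.
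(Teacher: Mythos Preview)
Your transfer-principle approach has a genuine gap: the claim that ``$L$ is simple'' is false in general. The hypothesis is only that $P=[x_0,\ldots,x_t]$ is a path and that $E(H)\setminus E(P)$ forms a path $Q$; in a multigraph $\lambda K_{v,u}$ with $\lambda\ge 2$, the edge-disjoint paths $P$ and $Q$ may well traverse parallel edges between the same pair of vertices, so $H$ can contain $2$-cycles. (For a concrete instance take $P=[x_0,x_1,x_2,x_3,x_4]$ and let $Q$ be the path using the second copies of exactly the same four vertex-pairs; then $H$ is a chain of four $2$-cycles and $x_1x_4\notin E(H)$.) The paper's proof is organised around precisely this point: it first disposes of the simple-leave case by the transfer principle you invoke, and then treats the case where the leave has a $2$-cycle by writing $Q=[y_0(=x_0),y_1,\ldots,y_{\ell-t}(=x_t)]$ and checking directly that the terminus is $y_1$ or $y_{\ell-t-1}$, each of which yields the required $t$-cycle and $(\ell-t)$-cycle.

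Your third paragraph is close to that direct argument but handles only one of the two possible termini. You assume the deleted edge at the terminus $b$ is $x_tb$, i.e.\ that $b$ is a neighbour of $x_t$ in $L$; but $b$ may instead be the $Q$-neighbour $y_1$ of $x_0$, in which case the switch deletes $x_0b$ and inserts $x_tb$. Both cases must be checked (the resulting cycles are $(x_1,\ldots,x_t)$ together with either $(y_0,\ldots,y_{\ell-t-1})$ or $(y_1,\ldots,y_{\ell-t})$), and the ``toggle'' language should be replaced by multiplicity adjustments once $L$ is allowed to be non-simple.
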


\begin{proof}
By our assumptions, the degree of each vertex in the leave of $\mathcal{P}$ must have even degree and further $\deg_H(x_0)=\deg_H(x_t)=2$. Since Lemma~{\normalfont\ref{cycleSwitching}} is no different than Theorem~{\normalfont\ref{lemma2.1}} if the leave is a simple graph when comparing $\l K_{v,u}$ to $K_{v,u}$, the result follows from Theorem~{\normalfont\ref{lemma2.1}} in the case when there are no $2$-cycles in the leave of $\mathcal{P}$. So suppose there is at least one $2$-cycle in the leave of $\mathcal{P}$. Let the path containing the edges $E(H)\setminus E(P)$ be $[y_0(=x_0),y_1,y_2,\ldots,y_{\ell-t}(=x_t)]$. It is clear that either $y_1$ or $y_{\ell-t-1}$ are the terminus of $S$ since it is supposed that $x_{t-1}$ is not a terminus. If $S$ has terminus $y_1$ then the leave of $\mathcal{P}'$ can be decomposed into a $t$-cycle $(x_1,x_2,\ldots,x_t)$ and an $(\ell-t)$-cycle $(y_1,y_2,\ldots,y_{\ell-t})$ with at least as many vertices of degree $4$ as in the leave of $\mathcal{P}$. Suppose $y_{\ell-t-1}$ is the terminus. Then $y_{\ell-t-1}\neq x_{t-1}$ since $x_{t-1}$ is not a terminus by assumption.
By performing $S$, the leave of $\mathcal{P}'$ can be decomposed into a $t$-cycle $(x_1,x_2,\ldots,x_{t})$ and an $(\ell-t)$-cycle $(y_0,y_1,\ldots,y_{\ell-t-1})$ with at least as many vertices of degree $4$ as in the leave of $\mathcal{P}$. 
\end{proof}

The following lemma is the analogue to Theorems~{\normalfont\ref{lemma2.5H}} and {\normalfont\ref{lemma2.6H}}.

\begin{lemma}\label{lemma2.6}
Suppose there exists an $(M)^*$-cycle packing of $\l K_{v,u}$ with a leave whose only non-trivial component is a $(p,q)$-chain. If $m_1$ and $m_2$ are even integers such that $m_1,m_2\geq 2$ and $m_1+m_2=p+q$, then there exists an $(M,m_1,m_2)^*$-decomposition of $\l K_{v,u}$.
\end{lemma}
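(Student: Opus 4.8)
The plan is to follow Horsley's proof of Theorem~\ref{lemma2.6H}, whose engine is the chain-splitting move of Theorem~\ref{lemma2.5H}: inside the $(p,q)$-chain one locates an even path $P=[x_0,\dots,x_t]$ of length at least $4$ whose complementary edges also form a path, applies the $(x_0,x_t)$-switch of Lemma~\ref{lemma2.3} (the multigraph analogue already in hand), and thereby either splits the chain directly into an $m_1$-cycle and an $m_2$-cycle or produces a new chain of the same total length $p+q$ with shifted part-sizes. Iterating exactly as Horsley does terminates with cycles of lengths $m_1$ and $m_2$, which together with the cycles of $\mathcal{P}$ furnish the desired $(M,m_1,m_2)^*$-decomposition.

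First I would settle the case $m_1,m_2\ge 4$ under the additional assumption $p,q\ge 4$. Then the $(p,q)$-chain is a simple graph and, crucially, every intermediate leave produced by the iteration remains simple (all cycles have length $\ge 4$, the $+2/+4$ options of Theorem~\ref{lemma2.5H} providing enough freedom to stay there). By the observation closing Section~\ref{edgeSwitching}—that an $(\alpha,\beta)$-switch acts on a simple leave of $\lambda K_{v,u}$ exactly as it does on the corresponding leave of $K_{v,u}$—the entire argument of Theorem~\ref{lemma2.6H} transfers verbatim to $\lambda K_{v,u}$, settling this case.

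It remains to treat the appearance of a $2$-cycle. Assume without loss of generality $p\le q$ and $m_1\le m_2$; if $(p,q)=(m_1,m_2)$ the chain is already the required decomposition, so the only outstanding possibilities are $p=2<q$ with $m_1,m_2\ge 4$, and $p,q\ge 4$ with $m_1=2<m_2$. For the first I would absorb the input $2$-cycle: since the starting leave is now non-simple, Lemma~\ref{cycleSwitching} with twin vertices of unequal multiplicity can alter multiplicities, and I would use it to convert the $(2,q)$-chain (note $q=m_1+m_2-2\ge 6$) into a simple chain with both parts at least $4$, returning to the case already settled.

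The hard part will be the target $2$-cycle case $m_1=2$ with a simple input chain. Here no amount of switching from $\mathcal{P}$ helps: as noted in Section~\ref{edgeSwitching}, an $(\alpha,\beta)$-switch sends a simple leave to a simple leave, so the evolving leave never acquires the parallel pair of edges a $2$-cycle requires, and every reachable final split has both cycles of length $\ge 4$. The plan is therefore to step outside leave-preserving switching altogether: choose a vertex pair $\{a,b\}$ carrying at least two edges of $\lambda K_{v,u}$ (available precisely because $\lambda\ge 2$ is the only regime in which a $2$-cycle can be requested), set two of them aside as the $2$-cycle, and exhibit an $(M,m_2)^*$-decomposition of the remaining edges. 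The delicate points are selecting $\{a,b\}$ and reorganizing so that the leftover graph again admits a simple-chain packing to which the settled $m_1,m_2\ge 4$ case applies, all compatibly with the cycles of $M$; establishing this reduction, rather than the routine Horsley iteration, is the crux of the lemma.
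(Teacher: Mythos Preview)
Your plan for the principal case ($p,q\ge 4$ and $m_1,m_2\ge 4$) coincides with the paper's: transfer Theorem~\ref{lemma2.6H} to $\lambda K_{v,u}$ via the simple-leave observation at the end of Section~\ref{edgeSwitching}.

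Where you diverge is in the boundary cases, and in both you work much harder than the paper. For $p=2$ with $m_1,m_2\ge 4$ the paper does not detour through any ``conversion to a simple chain.'' Writing the $(2,q)$-chain as $(x_0,c)\cdot(c,y_1,\ldots,y_{q-1})$ with $2<m_1\le m_2<q$, a single $(x_0,y_{m_2-1})$-switch with origin $c$ suffices: since $m_2-1$ is odd, $x_0$ and $y_{m_2-1}$ lie in the same part and are twin, and whatever the terminus the leave falls apart into an $m_1$-cycle and an $m_2$-cycle. Your plan to first absorb the $2$-cycle into a simple chain is both unnecessary and, as stated, vague---you never specify a switch that actually accomplishes it.

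As for the target $m_1=2$ with a simple input chain ($p,q\ge 4$): your diagnosis that switching alone cannot manufacture a $2$-cycle in the leave is correct. But your proposed remedy---reserve two parallel edges of $\lambda K_{v,u}$ as the $2$-cycle and then reorganize the rest into an $(M,m_2)^*$-decomposition---is not carried out; you yourself call it ``the crux of the lemma'' and stop, without explaining how to choose $\{a,b\}$ compatibly with the cycles of $M$ or how to repair whatever cycles those two edges were sitting in. That is a genuine gap in your proposal. It may be some consolation that the paper's own proof disposes of the entire range $p,q\ge 3$ by citing Theorem~\ref{lemma2.6H}, whose hypothesis is $m_1,m_2\ge 4$; so the paper does not treat this subcase either, and neither argument establishes the lemma exactly as stated in this corner.
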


\begin{proof}
If $p,q\geq 3$, then we are done by Theorem~{\normalfont\ref{lemma2.6H}}. If $p=q=2$, then $m_1=m_2=2$ and we are done. Let $p=2$, so if $m_1=2$, $p=m_1$ and $q=m_2$. Then $2<m_1\leq m_2< q$, and let the $(2,q)$-chain be $(x_0,c)\cdot(c,y_1,y_2,\ldots,y_{q-1})$. Let $\mathcal{P}'$ be the $(M)^*$-cycle packing of $\l K_{v,u}$ obtained from $\mathcal{P}$ by performing the $(x_0,y_{m_2-1})$-switch with origin $c$ ($x_0$ and $y_{m_2-1}$ are twin since $m_2-1$ is odd). Then regardless of the terminus, we form an $m_2$-cycle and a $(q+2-m_2=m_1)$-cycle in the leave of $\mathcal{P}'$. 
\end{proof}

This next lemma is the multigraph version of Theorem~{\normalfont\ref{lemma3.1H}}.

\begin{lemma}\label{lemma3.1}
Suppose there exists an $(M)^*$-cycle packing of $\l K_{v,u}$, where $v\leq u$, with a leave of size $\ell$, where $\ell\leq 2v+2$ if $v<u$ and $\ell\leq 2v$ if $v=u$, with only one non-trivial component $H$. If $m_1$ and $m_2$ are even integers such that $m_1,m_2\geq 2$ and either 
\begin{itemize}
\item $H$ is a chain which has a decomposition into an $m_1$-path and an $m_2$-path; or
\item $H$ is a ring which has a decomposition into an $m_1$-path and an $m_2$-path;
\end{itemize}
then there exists an $(M,m_1,m_2)^*$-cycle decomposition of $\l K_{v,u}$.
\end{lemma}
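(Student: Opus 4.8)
My plan is to mirror the proof of Theorem~\ref{lemma3.1H}, Horsley's corresponding statement for the simple graph $K_{v,u}$, and to supply separate arguments only for the two phenomena that cannot occur over a simple graph. Assume without loss of generality that $m_1\leq m_2$. Writing the leave as $H=P\cup Q$ where $P$ is the $m_1$-path and $Q$ is the $m_2$-path, one checks that $P$ and $Q$ share their two endpoints $s$ and $t$ (both of degree $2$ in $H$) and otherwise meet only at the degree-$4$ link vertices of the chain or ring; since $m_1,m_2$ are even, $s$ and $t$ lie in the same part of $\l K_{v,u}$. The target is to rearrange $H$ into an $m_1$-cycle and an $m_2$-cycle, which are then removed from the leave to produce the desired $(M,m_1,m_2)^*$-cycle decomposition.

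The first and main case is $m_1,m_2\geq 4$ with $H$ simple. Here every switch used in the proof of Theorem~\ref{lemma3.1H} is performed on a simple leave, and by the observation following Lemma~\ref{cycleSwitching} each such $(\a,\b)$-switch in $\l K_{v,u}$ acts exactly as the corresponding switch on $K_{v,u}$, keeping the leave simple at every step. Consequently Horsley's argument transfers essentially verbatim: its switches are replayed via Lemma~\ref{cycleSwitching} and its appeals to Horsley's lemmas are replaced by our multigraph analogues, Lemma~\ref{lemma2.3} and Lemma~\ref{lemma2.6}. The size hypotheses $\ell\leq 2v+2$ (if $v<u$) and $\ell\leq 2v$ (if $v=u$) are precisely those guaranteeing that the switch termini needed in Theorem~\ref{lemma3.1H} exist, so nothing new is required here.

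It remains to treat the genuinely multigraph cases: (i) $m_1=2$, so one target cycle is a $2$-cycle; and (ii) $H$ contains a $2$-cycle among its constituent cycles, so the leave is not simple. In each case a pair of parallel edges must be present in, or created in, the leave, the simplicity-preserving observation no longer applies, and the effect of each switch must be checked directly, exactly as in the proofs of Lemma~\ref{lemma2.3} and Lemma~\ref{lemma2.6}. My strategy is to use switches to reduce $H$ to a leave whose only non-trivial component is a $2$-chain $(p,q)$ with $p+q=\ell=m_1+m_2$, and then to invoke Lemma~\ref{lemma2.6}, whose hypotheses already permit $m_1,m_2\geq 2$ and leaves containing $2$-cycles. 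When $H$ is a ring I first reduce it to a chain by a switch at a link vertex, and when $H$ contains a $2$-cycle I route the switches so that the two parallel edges end up confined to one end cycle of the resulting $2$-chain, which is exactly the $(2,q)$-chain input handled in Lemma~\ref{lemma2.6}.

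I expect the bookkeeping in cases (i) and (ii) to be the main obstacle. Because the ``simple leave implies simple leave'' shortcut is unavailable precisely where parallel edges appear, I must track by hand which edges of the leave are parallel as switches are performed, and verify at each step both that the non-trivial component remains a single chain and that a legal terminus for the next switch still exists. The delicate point, mirroring the terminus analysis for $2$-cycles in Lemma~\ref{lemma2.3} and Lemma~\ref{lemma2.6}, is to ensure that creating a $2$-cycle neither splits off an extra non-trivial component nor forces the use of an edge that the size bound $\ell\leq 2\min(\{v,u\})+2$ forbids.
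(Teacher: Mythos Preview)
Your plan shares the paper's opening move: when $H$ contains no $2$-cycle (and $m_1,m_2\geq 4$), both of you defer to Theorem~\ref{lemma3.1H} via the observation that simple leaves stay simple under switches. The divergence is in how leaves containing a $2$-cycle are handled.

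The paper does not attempt a one-shot reduction to a $2$-chain. It argues by induction on $s$, the number of cycles in the chain or ring. In the chain case it performs the $(x_0,x_{m_1})$-switch with origin $x_1$: if the terminus is not $x_{m_1-1}$, Lemma~\ref{lemma2.3} already produces the $m_1$-cycle and $m_2$-cycle; if the terminus is $x_{m_1-1}$, the leave becomes an $(s{-}1)$-\emph{ring} (not a smaller chain), and the inductive hypothesis applies. In the ring case a switch pairing a suitable link vertex with an isolated twin (guaranteed by the size bound on $\ell$) yields either an $(s{-}1)$-ring or an $s$-chain. The two cases feed each other and the induction bottoms out at $s=2$, where Lemma~\ref{lemma2.6} (for chains) or a further single switch followed by Lemma~\ref{lemma2.6} (for rings) finishes.

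Your sentence ``use switches to reduce $H$ to a $2$-chain and then invoke Lemma~\ref{lemma2.6}'' hides exactly this difficulty. The natural switch on an $s$-chain does not yield a smaller chain; it either finishes outright via Lemma~\ref{lemma2.3} or yields a ring. So a single-direction reduction chain $\to$ smaller chain $\to\cdots\to 2$-chain is not what actually happens, and ``route the switches so that the two parallel edges end up confined to one end cycle'' is not yet a procedure. The bookkeeping you flag as the main obstacle is precisely what the paper's induction on $s$ absorbs; I would recast your cases (i) and (ii) in that inductive shape rather than pursue a direct reduction.
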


\begin{proof}
Let $\mathcal{P}$ be an $(M)^*$-cycle packing of $G=\l K_{v,u}$ with vertex partition $\{V,U\}$ that satisfies the conditions of this lemma and let $L$ be its leave. Let $P=[x_0,x_1,\ldots,x_{m_1-1},x_{m_1}]$ be an $m_1$-path in $H$. The result will be shown using induction on the number of cycles in the ring or chain, $s$. Note that if $H$ contains no $2$-cycles, we are done by Theorem~{\normalfont\ref{lemma3.1H}} so let $H$ contain a $2$-cycle.

If $s=2$ and $H$ is a chain then we are done by Lemma~{\normalfont\ref{lemma2.6}}. Suppose $s=2$ and $H$ is a ring. Then there are two vertices of degree $4$ in $H$. Since $s=2$ and $H$ has a $2$-cycle, the two vertices of degree $4$ are adjacent, and hence in different parts. Let $\alpha$ be the vertex of degree $4$ in $U$. There exists a vertex $\beta\in U$ (that is hence twin to $\alpha$ in $G$) such that $\deg_H(\beta)=0$ for the following reason. Note that every vertex in $H$ has even degree.  Further, $H$ is bipartite (since $G$ was) and hence $\sum_{u\in U}\deg_H(u)=|L|$.  If no such $\beta$ existed, then $\sum_{u\in U}\deg_H(u)\geq 2u+2$ (where the extra $+2$ comes from the vertex of degree $4$).  But then $2u+2\leq |L|$ a contradiction with the size of $L$. 

Then let $\mathcal{P}'$ be the $(M)^*$-cycle packing of $\l K_{v,u}$ obtained by performing an $(\alpha,\beta)$-switch where the origin is a vertex that is a neighbor of $\alpha$ in $H$. Regardless of the terminus, the resulting leave will be a $2$-chain and so by Lemma~{\normalfont\ref{lemma2.6}}, the intended result follows. Now suppose by way of induction that for all $3\leq t<s$, a $t$-chain or $t$-ring, which has a decomposition into an $m_1$-path and an $m_2$-path, can be decomposed as the lemma states.
Notice that since $s\geq 3$, $m_1,m_2\geq 4$. We shall proceed in two cases depending on whether $H$ is a chain or ring.

{\bf Case 1:} Suppose $H$ is an $s$-chain. It is clear that the degree of $x_0$ and $x_{m_1}$ is $2$ in $L$ since otherwise either $P$ is not a path or $E(L)\setminus E(P)$ is not a path. So $x_0$ and $x_{m_1}$ are in the end cycles of the $s$-chain $H$. Since $m_1$ is even, $x_0$ and $x_{m_1}$ are twin. Let $\mathcal{P}^\dagger$ be the $(M)^*$-cycle packing of $\l K_{v,u}$ obtained by performing the $(x_0,x_{m_1})$-switch $S$ with origin $x_1$ where the leave is $L^\dagger$. Recall that $x_1x_{m_1}\not\in E(H)$ since $H$ is an $s$-chain. So if the terminus of $S$ is not $x_{m_1-1}$ then we are done by Lemma~{\normalfont\ref{lemma2.3}}. Suppose that the terminus of $S$ is $x_{m_1-1}$. Then the resulting leave is an $(s-1)$-ring and $P^\dagger=(E(P)\setminus\{x_0x_1,x_{m_1-1}x_{m_1}\})\cup\{x_0x_{m_1-1},x_1x_{m_1}\}$ is an $m_1$-path in $L^\dagger$ such that $E(L^\dagger)\setminus E(P^\dagger)$ is an $m_2$-path. Thus, we are done by the induction hypothesis.

{\bf Case 2:} Suppose $H$ is an $s$-ring. If all ring cycles in $H$ have length $2$ then let $\alpha$ be a link vertex in $H$ and $\beta$ be a vertex twin with $\alpha$ in $L$ such that $\deg_L(\beta)=0$ (vertices $\alpha$ and $\beta$ exist for the same reason as in Case 1). 
Then let $\mathcal{P}'$ be the $(M)^*$-cycle packing of $\l K_{v,u}$ obtained by performing the $(\alpha,\beta)$-switch where the origin is a neighbor of $\alpha$ in $H$. The resulting leave will either be an $(s-1)$-ring in which case we are done by induction or an $s$-chain in which case we are back in Case $1$, so the required decomposition is reached. 

So $H$ must contain a ring cycle $C$ that is larger than a $2$-cycle such that $|V(C)\cap V(D)|=1$ where $D$ is a $2$-cycle (recall that there must be at least one $2$-cycle). Let $c$ be the link vertex in $C$ and $D$, let $D=(\alpha,c)$, and let $\beta\in V(L)$ such that $\deg_L(\beta)=0$ (vertices $\alpha$ and $\beta$ exist for the same reason as in Case 1). Then we obtain an $(M)^*$-cycle packing of $\l K_{v,u}$ by performing a $(c,\beta)$-switch with origin $\alpha$ with a leave that is either an $s$-chain in which case we proceed as in Case $1$ or an $(s-1)$-ring in which case we are done by the induction hypothesis.

\end{proof}

The following lemma will use Theorem~{\normalfont\ref{lemma3.2H}} to prove the multigraph case when $2$-cycles are not in the leave. Note that we do not need the $m_1$-path and $m_2$-path to have end vertices in a designated part of $\l K_{v,u}$ as was needed in {\normalfont\cite{Horsley}}.

\begin{lemma}\label{lemma3.2}
Suppose that there exists an $(M)^*$-cycle packing $\mathcal{P}$ of $\l K_{v,u}$ with a leave $L$ of size $\ell$ with $k$ non-trivial components such that exactly one vertex of $L$ has degree $4$ and every other vertex of $L$ has degree $2$ or degree $0$. If 
$m_1$ and $m_2$ are integers such that $m_1,m_2\geq k+1$ and $m_1+m_2=\ell$, then there exists an $(M)^*$-cycle packing of $\l K_{v,u}$ with a leave whose only non-trivial component is a chain which has a decomposition into an $m_1$-path and an $m_2$-path.
\end{lemma}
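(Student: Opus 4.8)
The plan is to follow the proof of Theorem~\ref{lemma3.2H} in \cite{Horsley}, using Lemma~\ref{cycleSwitching} in place of the simple-graph switch. By the observation at the end of Section~\ref{edgeSwitching}, as long as the current leave is simple every $(\alpha,\beta)$-switch on $\l K_{v,u}$ has the same effect on the leave as the corresponding switch on $K_{v,u}$, so Horsley's argument transfers verbatim. Hence if $L$ contains no $2$-cycle, then $L$ is a simple graph and the conclusion follows at once from Theorem~\ref{lemma3.2H}; note that the requirement in Theorem~\ref{lemma3.2H} that an end vertex of the paths lie in a prescribed part $R$ is stronger than what is claimed here and may simply be discarded. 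All of the remaining work is therefore in the case that $L$ contains at least one $2$-cycle.

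In that case I would induct on the number $k$ of non-trivial components of $L$, as in the proofs of Lemmas~\ref{lemma2.6} and~\ref{lemma3.1}. Every switch used will be an $(\alpha,\beta)$-switch in which $\alpha$ and $\beta$ are chosen in the same part of $\l K_{v,u}$; since any two vertices of one part are twin in $\l K_{v,u}$, Lemma~\ref{cycleSwitching} always applies. A $2$-cycle never needs to be broken up: it is permitted as a length-$2$ cycle of the final chain, because in any decomposition of a chain into an $m_1$-path and an $m_2$-path the two parallel edges of such a cycle are assigned one to each path.

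For the base case $k=1$ the unique non-trivial component is a $2$-chain whose two cycles, of lengths $p$ and $q$ with $p+q=\ell$, meet at the degree-$4$ vertex $w$. In its closed Eulerian trail $w$ occurs exactly twice and every other vertex once; cutting the trail at two edges that separate the two occurrences of $w$ produces an $m_1$-path and an $m_2$-path, and a short check shows that an admissible pair of cut edges exists for every $m_1,m_2\ge 2$ with $m_1+m_2=\ell$ (including the subcases $p=2$ or $q=2$). For the inductive step $k\ge 2$ I would select a cycle component $C$ together with a second component and merge the two into a single cycle by an $(\alpha,\beta)$-switch with $\alpha\in C$, $\beta$ in the second component, and origin a neighbour of $\alpha$; this reduces the number of non-trivial components by one while keeping $w$ as the only vertex of degree $4$, and the induction hypothesis then yields the required chain.

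The main obstacle is the bookkeeping attached to this merging step. One must verify that the switch can be arranged, through the choice of $\alpha$, $\beta$ and origin, to fuse the two components into one cycle rather than to create a second degree-$4$ vertex, and that the $2$-cycles present obstruct neither any switch nor the final path decomposition (the delicate situation being a figure-eight both of whose loops are $2$-cycles); these points are handled exactly as the corresponding $2$-cycle subcases in Lemmas~\ref{lemma2.6} and~\ref{lemma3.1}. This is also where the hypothesis $m_1,m_2\ge k+1$ is used: the chain assembled from the $k$ components carries as many as $k$ link vertices, and each of the two paths must pass through all of them, forcing each path to contain at least $k+1$ edges.
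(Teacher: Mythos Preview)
Your reduction to Theorem~\ref{lemma3.2H} when the leave is simple is correct, and your base case $k=1$ is fine. The gap is in the inductive step.

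When you perform an $(\alpha,\beta)$-switch with $\alpha$ in a cycle component $C$, $\beta$ in a second cycle component $D$, and origin a neighbour $a$ of $\alpha$, Lemma~\ref{cycleSwitching} does \emph{not} let you choose the terminus: the partition $\tau$ of the edge set $A$ is handed to you. If the pair containing $a\alpha$ is $\{a\alpha,a'\alpha\}$ (where $a'$ is the other neighbour of $\alpha$), the switch isolates $\alpha$ and gives $\beta$ degree $4$, so $C$ and $D$ fuse into a $2$-chain rather than a single cycle. The number of components still drops by one, but the leave now has \emph{two} vertices of degree $4$ ($w$ and $\beta$), and your induction hypothesis no longer applies. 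Your appeal to Lemmas~\ref{lemma2.6} and~\ref{lemma3.1} does not rescue this: those lemmas work inside a single connected chain or ring and never face the problem of merging disjoint components while keeping the degree-$4$ count at one. In Lemma~\ref{lemma3.1} both possible outcomes of each switch are acceptable for the argument there; in your setting one outcome is not.

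The paper sidesteps this by a different construction. It separates the $2$-cycle components from the cycle components of length at least $4$, applies Theorem~\ref{lemma3.2H} to the (simple) union of the $2$-chain $H$ with the longer cycles to get a single chain decomposed into an $(m_1-k_1)$-path and an $(m_2-k_1)$-path, and then absorbs the $k_1$ remaining $2$-cycles one at a time. The absorption step is a $(z_0,x_0)$-switch with origin $x_1$, where $x_0$ is the common endpoint of the two current paths and $(z_0,z_1)$ is a $2$-cycle; the key point is that this particular switch extends each path by one edge \emph{regardless of the terminus}, so no control over $\tau$ is needed. Additional case analysis (on how many $2$-cycles sit inside $H$ itself) is required to reach a configuration where Theorem~\ref{lemma3.2H} can be invoked, and this is the bulk of the paper's proof.
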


\begin{proof}
Note that the leave must have one component that is a $2$-chain and all other non-trivial component must be cycles. Let $k_1$ be the number of components of the leave that are $2$-cycles and let $k_2=k-k_1-1$ be the number of components of the leave that are cycles of length at least $3$. 

Recall that Theorem~{\normalfont\ref{lemma3.2H}} holds when the leave contains only cycles that are not $2$-cycles and the $2$-chain does not contain a $2$-cycle. Let $H$ be the $2$-chain and $C_1,C_2,\ldots,C_{k-1}$ be the cycles such that $H\cup C_1\cup C_2\cup \cdots\cup C_{k-1}$ forms the leave of $\mathcal{P}$ and $C_1,C_2,\ldots,C_{k_1}$ are $2$-cycles (if $k=1$ then the leave has a single component $H$). We break this proof into three cases depending on the number of $2$-cycles in $H$.

{\bf Case 1:} Suppose $H$ does not contain a $2$-cycle. 
The breakdown of this case will be to first use Theorem~\ref{lemma3.2H} to join $H$ to all the cycles of length at least $3$ (since there are no $2$-cycles in $H\cup C_{k_1+1}\cup C_{k_1+2}\cup \cdots \cup C_{k-1}$) and then we will join the resulting chain to the $2$-cycles afterward. Notice that by Lemma~{\normalfont\ref{cycleSwitching}}, if $\alpha$ and $\beta$ are two twin vertices in $L$ and some cycle $C_i$, $i\in \mathbb{Z}_k\setminus\{0\}$, does not contain vertices $\alpha$ or $\beta$, then none of the edges in $C_i$ will change after performing an $(\alpha,\beta)$-switch. 
Then by Theorem~{\normalfont\ref{lemma3.2H}}, the $k_2$ cycles of length at least $3$ can be joined to $H$ without changing the length of the $k_1$ $2$-cycles, so that there exists an $(M)^*$-cycle packing $\mathcal{P}'$ of $\l K_{v,u}$ with a leave whose non-trivial components are $k_1$ $2$-cycles and a chain which has a decomposition into an $(m_1-k_1)$-path and an $(m_2-k_1)$-path. 
We will form a sequence of $(M)^*$-cycle packings of $\l K_{v,u}$ $\mathcal{P}_1,\mathcal{P}_2,\ldots,\mathcal{P}_{k_1}$ as follows.
Let $P_1=[x_0,x_1,\ldots,x_{m_1-k_1}]$ and $P'_1=[y_0,y_1,\ldots,y_{m_2-k_1}]$ be the paths that decompose the chain in the leave of $\mathcal{P}_1=\mathcal{P}'$. Since $x_0=y_0$, let $C_1=(z_0,z_1)$ such that $z_0$ and $x_0$ are twin (since our graph is bipartite). Then let $\mathcal{P}_2$ be the $(M)^*$-cycle packing of $\l K_{v,u}$ obtained by performing the $(z_0,x_0)$-switch $S$ with origin $x_1$ and let $L_2$ be the leave of $\mathcal{P}_2$. Regardless of the terminus of $S$, the resulting leave $L_2$ will have $k_1-1$ disjoint $2$-cycles and the chain will have two more edges so that each path $P_1$ and $P'_1$ can be extended by a single edge to form new paths $P_2$ and $P_2'$. By repeating this process $k_1$ times, we extend each path to form $P_{k_1}$ and $P_{k_1}'$ that together decompose the leave of $\mathcal{P}_{k_1}$ so that we end up with a single chain that can be decomposed into an $m_1$-path and an $m_2$-path.

{\bf Case 2:} Suppose $H$ contains exactly one $2$-cycle. 
We will form a sequence of $(M)^*$-cycle packings of $\l K_{v,u}$ 
$\mathcal{P}_1,\mathcal{P}_2,\ldots,\mathcal{P}_{k_2-1}$ as follows.
Let $x_1\in V(H)$ such that $x_1$ is not in the $2$-cycle in $H$, let $y_1$ be a neighbor of $x_1$ in $H$. If there exists a cycle of length at least $3$, let $z_1\in V(C_{k_1+1})$ (recall that $C_{k_1+1}$ is \textit{not} a $2$-cycle) such that $z_1$ and $x_1$ are twin (we can guarantee such twin vertices exist since our graph is bipartite). If there is no such cycle, that is, if $k_2=0$. Then the leave is a $2$-chain with a $2$-cycle and an $(m_1+m_2-2k_1-2)$-cycle. This $2$-chain can easily be decomposed into an $(m_1-k_1)$-path and an $(m_2-k_1)$-path. We proceed to join this $2$-chain with the remaining $2$-cycles as we did in Case $1$ to form a single chain which has a decomposition into an $m_1$-path and an $m_2$-path.
Otherwise, if $k_2\neq 0$, we proceed as follows. 
Let $\mathcal{P}_1$ be the $(M)^*$-cycle packing of $\l K_{v,u}$ obtained by performing the $(x_1,z_1)$-switch with origin $y_1$ and let $H_2$ be the component of the leave of $\mathcal{P}_1$ that is a chain ($H_2$ is either a $2$-chain or $3$-chain). 
If $H_2$ is a $3$-chain where the $2$-cycle is an end cycle, then there is an isolated vertex in $L$ (since $m_1+m_2<v+u$ and $|V(H_2)|<m_1+m_2-2$), say $b$, in the same part of the partition $V(\l K_{v,u})$ as the link vertex, $x'$, of the $2$-cycle in $H_2$, and so let $\mathcal{P}_1'$ be the $(M)^*$-cycle packing of $\l K_{v,u}$ obtained by performing the $(b,x')$-switch $S$ where the origin of $S$ is a neighbor of $x'$ and let $H_2'$ be the chain in the leave of $\mathcal{P}_1'$. Regardless of the terminus, $H_2'$ is a $2$-chain with no $2$-cycles in which case we proceed as in Case $1$. So we may assume that $H_2$ is a $2$-chain that contains exactly one $2$-cycle. 
We proceed in the following inductive manner to form a sequence $\mathcal{P}_1,\mathcal{P}_2,\ldots,\mathcal{P}_{k_2-1}$ of cycle packings of $\l K_{v,u}$ so that the leave contains fewer and fewer cycles with lengths at least $3$. 
As in the beginning of this case, let $x_i\in V(H_i)$ such that $x_i$ is not in the $2$-cycle, let $y_i$ be a neighbor of $x_i$ in $H_i$, and let $z_i\in V(C_{k_1+i})$ such that $x_i$ and $z_i$ are twin (since our graph is bipartite), so then let $\mathcal{P}_i$ be the $(M)^*$-cycle packing of $\l K_{v,u}$ obtained by performing the $(x_i,z_i)$-switch with origin $y_i$ and let $H_{i+1}$ be the component of the leave of $\mathcal{P}_i$ that is a chain for $i\in\{1,2,\ldots,k_2-1\}$. If $H_{i+1}$ is ever a $3$-chain, we can perform the same cycle switch as described earlier and proceed as in Case $1$. So $H_{k_2}$ is a $2$-chain with a $2$-cycle and an $(m_1+m_2-2k_1-2)$-cycle. This $2$-chain can easily be decomposed into an $(m_1-k_1)$-path and an $(m_2-k_1)$-path. Now we proceed to join this $2$-chain with the remaining $2$-cycles as we did in Case $1$ to form a single chain which has a decomposition into an $m_1$-path and an $m_2$-path.

{\bf Case 3:} Suppose $H$ contains exactly two $2$-cycles. Suppose that $k_2=0$. Then there are $k_1+1=k$ components, $m_1+m_2=\ell=2k_1+4=2k-2+4=2k+2$ and $m_1=m_2=k+1$. Let $a$ be a vertex of degree $2$ in $H$ and $b$ be a vertex in $C_1$ that is twin with $a$ (which exists since our graph is bipartite). Then let $\mathcal{P}^\dagger$ be the $(M)^*$-cycle packing of $\l K_{v,u}$ obtained by performing the $(a,b)$-switch where the origin is in $C_1$, so the resulting leave will have one less component and the leave will either be a $3$-chain of $2$-cycles (if the terminus is in $C_1$) or a $2$-chain with a $4$-cycle and a $2$-cycle (if the terminus is not in $C_1$) which in either case can be decomposed into two $3$-paths. In the latter case we are in Case $2$.
Thus we continue this process for the remaining $k_1-1$ $2$-cycles to form a chain that can be decomposed into an $m_1$-path and an $m_2$-path. 

Now suppose $k_2>0$. Let $H=(x,c)\cdot(c,y)$ and let $C_{k_1+1}=(z_0,z_1,\ldots,z_{t})$ such that $z_0$ is twin with $x$ (which exists since our graph is bipartite). Then let $\mathcal{P}^\ddagger$ be the $(M)^*$-cycle packing of $\l K_{v,u}$ obtained by performing the $(x,z_0)$-switch with origin $z_1$ and let the leave be $L^\ddagger$. Then the leave is a collection of vertex-disjoint $2$-cycles and either a $2$-chain with one cycle that is not a $2$-cycle or a $3$-chain with two $2$-cycles. In the former case we proceed as in Case $2$. We continue assuming the latter case, so let $y'$ be the link vertex between two of the $2$-cycles. Since there is now at least one isolated vertex in $L^\ddagger$, say $b$, that is twin with $y'$ (since our graph is bipartite), 
let $\mathcal{P}''$ be the $(M)^*$-cycle packing of $\l K_{v,u}$ obtained by performing the $(y',b)$-switch where the origin is a neighbor of $y'$ in the $3$-chain and let the leave of $\mathcal{P}''$ be $L''$. Then the resulting chain in $L''$ will either be a $2$-chain with no $2$-cycles or a $2$-chain with exactly one $2$-cycle. In the former case, we proceed as in Case $1$ and in the latter case, we proceed as in Case $2$.
\end{proof}

For completeness, we state below the analogous lemmas to Theorems~{\normalfont\ref{lemma3.3H}}, {\normalfont\ref{lemma3.4H}}, and {\normalfont\ref{lemma3.5H}} respectively. Their proofs follow mostly from the corresponding proofs in {\normalfont\cite{Horsley}}. Note that Lemma~{\normalfont\ref{lemma3.5}} is different to Theorem~{\normalfont\ref{lemma3.5H}} in that the latter result has at most $\min(\{k_0+d(\mathcal{P}_0)-1, \lfloor \frac{\ell}{4}\rfloor-1\})$ non-trivial components in the leave $L'$. This is a direct consequence of $2$-cycles existing in multigraphs and not in simple graphs. Again, it is clear that the same proof with just the obvious generalizations works for multigraphs. It should be noted that even though $v$ and $u$ are even for Theorems~{\normalfont\ref{lemma3.3H}}, {\normalfont\ref{lemma3.4H}}, and {\normalfont\ref{lemma3.5H}}, that fact is only used to show that each vertex in the leave has even degree. If there exists an $(M)^*$-cycle packing of $\l K_{v,u}$, then each vertex in the leave has even degree.

\begin{lemma}\label{lemma3.3}
Suppose that there exists an $(M)^*$-cycle packing of $\l K_{v,u}$ with a leave $L$ of size $\ell$ with $k$ non-trivial components such that exactly one vertex of $L$ has degree $4$, and every other vertex of $L$ has degree $2$ or degree $0$. If $m_1$ and $m_2$ are even integers such that $m_1,m_2\geq \max(\{2,k+1\})$ and $m_1+m_2=\ell$, then there is an $(M,m_1,m_2)^*$-cycle decomposition of $\l K_{v,u}$.
\end{lemma}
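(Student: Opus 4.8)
The plan is to prove Lemma~\ref{lemma3.3} by the same two-step reduction underlying Horsley's proof of Theorem~\ref{lemma3.3H}: first apply Lemma~\ref{lemma3.2} to replace the leave by one whose only non-trivial component is a single chain decomposing into an $m_1$-path and an $m_2$-path, and then apply Lemma~\ref{lemma3.1} to split that chain into the required $m_1$- and $m_2$-cycles. Throughout we may assume $v\leq u$, since $\l K_{v,u}\cong\l K_{u,v}$. Because Lemmas~\ref{lemma3.2} and~\ref{lemma3.1} are precisely the multigraph analogues whose proofs already absorb all of the $2$-cycle bookkeeping, no separate treatment of $2$-cycles (nor of the cases $m_1=2$ or $m_2=2$) is needed here; the relaxed bound $m_1,m_2\geq\max(\{2,k+1\})$, rather than $\max(\{4,k+1\})$, is exactly what those lemmas permit.

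First I would invoke Lemma~\ref{lemma3.2}. Its structural hypotheses on the leave (size $\ell$, exactly one vertex of degree $4$, all others of degree $2$ or $0$) are identical to those of Lemma~\ref{lemma3.3}, while its numerical hypotheses $m_1+m_2=\ell$ and $m_1,m_2\geq k+1$ hold, the latter because $m_1,m_2\geq\max(\{2,k+1\})\geq k+1$. This produces an $(M)^*$-cycle packing of $\l K_{v,u}$ whose leave again has size $\ell$ and whose only non-trivial component is a chain $H$ that decomposes into an $m_1$-path and an $m_2$-path.

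Next I would feed $H$ into the chain alternative of Lemma~\ref{lemma3.1}. The requirements that $m_1,m_2$ be even with $m_1,m_2\geq 2$, and that there be a single non-trivial component which is a chain splitting into an $m_1$-path and an $m_2$-path, are all in hand, so the only point demanding care is the size restriction $\ell\leq 2v+2$ if $v<u$ and $\ell\leq 2v$ if $v=u$. This restriction is absent from the statement of Lemma~\ref{lemma3.3}, but it is forced by the degree hypothesis and, since $\ell$, $v$, and $u$ are untouched by Lemma~\ref{lemma3.2}, may be checked once on the original leave $L$. Writing $V$ for the smaller part and using that the degree sum of $L$ over $V$ equals $\ell$: if the unique vertex of degree $4$ lies in $V$, then $V$ has $\tfrac{\ell}{2}-1$ vertices of positive degree, so $\tfrac{\ell}{2}-1\leq v$ and $\ell\leq 2v+2$; if it lies in $U$ (and likewise, when $v=u$, taking the part that does not contain it), then that part has only vertices of degree $2$ or $0$, giving $\tfrac{\ell}{2}\leq v$ and $\ell\leq 2v$. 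In every case the bound of Lemma~\ref{lemma3.1} holds, and the lemma then yields the asserted $(M,m_1,m_2)^*$-cycle decomposition of $\l K_{v,u}$.

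The step I expect to be most delicate is not any fresh combinatorial construction but the bookkeeping that allows the two lemmas to be chained: one must confirm that the output of Lemma~\ref{lemma3.2} is again an $(M)^*$-cycle packing of the same multigraph with a leave of the same size $\ell$, so that the size bound transfers, and that the single degree-$4$ vertex in $L$ genuinely forces the size restriction of Lemma~\ref{lemma3.1} in each of the cases above. Once these are verified the conclusion is immediate, since every substantive difficulty---including the introduction and elimination of $2$-cycles---has already been handled inside Lemmas~\ref{lemma3.2} and~\ref{lemma3.1}.
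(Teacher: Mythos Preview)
Your proposal is correct and follows exactly the paper's two-step approach: apply Lemma~\ref{lemma3.2} to obtain a chain leave, then apply Lemma~\ref{lemma3.1} to finish. The paper's own proof is a single sentence invoking these two lemmas; you go further by explicitly verifying the size hypothesis $\ell\leq 2v+2$ (or $\ell\leq 2v$ when $v=u$) of Lemma~\ref{lemma3.1} from the degree structure of $L$, a step the paper leaves implicit.
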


\begin{proof}
This result is a direct consequence of applying Lemma~{\normalfont\ref{lemma3.2}} to $\mathcal{P}$ to obtain an $(M)^*$-cycle packing of $\l K_{v,u}$ with the appropriate leave and then applying Lemma~{\normalfont\ref{lemma3.1}} to attain the required cycle decomposition.
\end{proof}

\begin{lemma}\label{lemma3.4}
Suppose that there exists an $(M)^*$-cycle packing of $\l K_{v,u}$ with a leave $L$. If $a$ and $b$ are vertices in the same part of $\l K_{v,u}$ such that $\deg_L(a)>\deg_L(b)$, then there exists an $(M)^*$-cycle packing of $\l K_{v,u}$ with a leave $L'$ such that $\deg_{L'}(a)=\deg_L(a)-2$, $\deg_{L'}(b)=\deg_L(b)+2$, and $\deg_{L'}(x)=\deg_L(x)$ for all $x\in V(L)\setminus\{a,b\}$. Furthermore, this $L'$ also satisfies
\begin{itemize}
\item[$(i)$] if $\deg_L(b)=0$ and $a$ is not a cut vertex of $L$, then $L'$ has the same number of non-trivial components as $L$; and
\item[$(ii)$] if $\deg_L(b)=0$, then either $L'$ has the same number of non-trivial components as $L$ or $L'$ has one more non-trivial component than $L$.
\end{itemize}
\end{lemma}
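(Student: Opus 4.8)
The plan is to obtain $L'$ from $L$ by a single $(a,b)$-switch whose switched pair consists of two edges that are \emph{both} incident to $a$: such a switch deletes two edges $ua,wa$ at $a$ and inserts two edges $ub,wb$ at $b$, which is precisely the degree transfer required. Since $a$ and $b$ lie in the same part of $\l K_{v,u}$ they are twin in $\l K_{v,u}$, so Lemma~\ref{cycleSwitching} is available with $\a=a$ and $\b=b$, and its structural guarantees ensure the resulting $\mathcal P'$ is again an $(M)^*$-cycle packing (every cycle is replaced by one of equal length and the $1$-factor is preserved). I would first record that every vertex of the leave of an $(M)^*$-cycle packing has even degree, so that $\deg_L(a)-\deg_L(b)$ is a positive even integer, in particular at least $2$.

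Next I would locate the desired pair inside the partition $\tau$ furnished by Lemma~\ref{cycleSwitching}. Because $a$ and $b$ are nonadjacent (same part), the switch set $A$ contains, counted with multiplicity, exactly $\deg_L(a)-\deg_L(b)$ more edges at $a$ than at $b$. Writing $n_{\a\a},n_{\b\b},n_{\a\b}$ for the numbers of pairs of $\tau$ having both ends at $a$, both at $b$, and one at each, one gets $2(n_{\a\a}-n_{\b\b})=\deg_L(a)-\deg_L(b)>0$, so $n_{\a\a}\ge 1$ and $\tau$ necessarily contains a pair $\{ua,wa\}$ of two edges at $a$ (with $u,w\in U$ possibly equal). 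Switching this pair yields a packing $\mathcal P'$ with leave $L'=(L-\{ua,wa\})\cup\{ub,wb\}$; only the four edges $ua,wa,ub,wb$ are altered, so only the degrees of $a,b,u,w$ can change, and since $u$ and $w$ each merely trade an edge at $a$ for an edge at $b$ their degrees are preserved. This gives $\deg_{L'}(a)=\deg_L(a)-2$, $\deg_{L'}(b)=\deg_L(b)+2$, and $\deg_{L'}(x)=\deg_L(x)$ for all other $x$, which is the main assertion.

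The delicate part, and the step I expect to be the main obstacle, is the component bookkeeping in $(i)$ and $(ii)$, where $\deg_L(b)=0$ so that $b$ is isolated in $L$ and has degree $2$ in $L'$; here $A$ is just the set of all edges at $a$, every pair of $\tau$ has the form $\{ua,wa\}$, and I may switch any one of them. Because the switch touches only edges at $a$ and $b$ and $b$ was isolated, every component of $L$ except the one, say $K$, containing $a,u,w$ is untouched, so it suffices to compare the non-trivial components of $L$ and $L'$ within $K\cup\{b\}$. For $(i)$, if $a$ is not a cut vertex then $K$ stays connected after deleting $a$, so $u$ and $w$ remain joined without using $a$; inserting $ub,wb$ then lets $b$ assume $a$'s role, and whether $a$ survives (degree at least $4$) or becomes isolated (degree $2$) the non-trivial component count is unchanged. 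For $(ii)$ I would argue that deleting $ua,wa$ breaks $K$ into at most three pieces, that $ub,wb$ fuse the pieces containing $u$ and $w$ together with $b$ into one non-trivial component, and that only the piece containing $a$ can remain separate; hence $L'$ has at most one more non-trivial component than $L$, and since the new component containing $b$ is always non-trivial it has never fewer, yielding the dichotomy of $(ii)$. The subcase $u=w$, in which a $2$-cycle $a$--$u$ is replaced by a $2$-cycle $b$--$u$, is absorbed by the same analysis. This mirrors the proof of Theorem~\ref{lemma3.4H} in \cite{Horsley}, the only genuinely new feature being the possible presence of $2$-cycles, which the case split handles without difficulty.
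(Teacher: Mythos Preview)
Your proof is correct and follows the same approach as the paper's. The paper's proof is extremely terse---it simply observes that $L$ is even so $\deg_L(a)\ge\deg_L(b)+2$, that $a$ and $b$ are twin, and hence that an $(a,b)$-switch exists whose origin and terminus are both neighbours of $a$ in $L$, after which the conclusions follow. Your counting argument with $n_{\alpha\alpha},n_{\beta\beta},n_{\alpha\beta}$ makes explicit why such a pair must occur in $\tau$, and your component analysis for $(i)$ and $(ii)$ spells out what the paper leaves to the reader; but the underlying idea---a single switch moving two edges from $a$ to $b$---is identical.
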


\begin{proof}
Suppose $\mathcal{P}$ is an $(M)^*$-cycle packing of $\l K_{v,u}$ as described in this lemma. Since $L$ is even, it follows that $\deg_L(a)\geq \deg_L(b)+2$. As a consequence of this and the fact that $a$ and $b$ are twins, there exists an $(a,b)$-switch whose origin and terminus are neighbors in $L$ of $a$. It then follows that the properties in the lemma are satisfied as a result of $L$ being an even graph.
\end{proof}

\begin{lemma}\label{lemma3.5}
Suppose that there exists an $(M)^*$-cycle packing $\mathcal{P}_0$ of $\l K_{v,u}$, where $v\leq u$, with a leave $L_0$ of size $\ell$, where $\ell\leq 2v+2$ if $v< u$ and $\ell \leq 2v$ if $v=u$, with $k_0$ non-trivial components such that $L_0$ has at least one vertex of degree at least $4$. Then, there exists an $(M)^*$-cycle packing of $\l K_{v,u}$ with a leave $L'$ such that exactly one vertex of $L'$ has degree $4$, every other vertex of $L'$ has degree $2$ or degree $0$, and $L'$ has at most $\min(\{k_0+d(\mathcal{P}_0)-1, \lfloor \frac{\ell}{2}\rfloor-1\})$ non-trivial components where
$$d(\mathcal{P}_0)=\frac{1}{2}\sum_{x\in D}(\deg_L(x)-2),$$
where $D$ is the set of vertices of $L$ having degree at least $4$.
\end{lemma}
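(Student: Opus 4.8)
The plan is to mirror Horsley's proof of Theorem~\ref{lemma3.5H}, repeatedly invoking the multigraph degree-shifting result Lemma~\ref{lemma3.4} to flatten out the high-degree vertices of the leave while carefully tracking the number of non-trivial components. Throughout, write $\mathcal{P}$ for the current packing, $L$ for its leave, $\ell=|E(L)|$, and $d(\mathcal{P})$ for the excess statistic in the statement; recall that $L$ is even and that $\ell$ is preserved by every application of Lemma~\ref{lemma3.4}. Since the degrees of $L$ are even, the target configuration---one vertex of degree $4$ and every other vertex of degree $0$ or $2$---is \emph{exactly} the condition $d(\mathcal{P})=1$: indeed $d(\mathcal{P})=1$ forces $\sum_{x\in D}(\deg_L(x)-2)=2$, which with even degrees at least $4$ can only mean a single degree-$4$ vertex. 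So the entire argument reduces to driving $d(\mathcal{P})$ down to $1$ while controlling components, and if $d(\mathcal{P}_0)=1$ already we are done with $L'=L_0$.

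First I would record the bookkeeping. If $a$ has degree at least $4$ and $b$ has degree $0$ in the same part of $\l K_{v,u}$, then applying Lemma~\ref{lemma3.4} with this pair lowers $\deg_L(a)$ by $2$ and raises $\deg_L(b)$ to $2$; because all the degrees in play are even, this lowers $d(\mathcal{P})$ by exactly $1$. Since $\deg_L(b)=0$, part $(ii)$ of Lemma~\ref{lemma3.4} guarantees the number of non-trivial components grows by at most $1$ at each such step. Hence, provided such a switch is always available while $d(\mathcal{P})\ge 2$, after exactly $d(\mathcal{P}_0)-1$ switches we reach a leave with $d=1$ and at most $k_0+d(\mathcal{P}_0)-1$ non-trivial components.

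The crux is therefore the existence of a suitable pair $a,b$ whenever $d(\mathcal{P})\ge 2$, and this is where the size hypothesis $\ell\le 2v+2$ (resp.\ $\ell\le 2v$) enters, just as smallness is used in Lemma~\ref{lemma3.1}. I would split on which part contains a vertex of degree at least $4$. If some such $a$ lies in $U$, then $\sum_{x\in U}\deg_L(x)=\ell\le 2u$ (note $\ell\le 2v+2\le 2u$ when $v<u$, and $\ell\le 2v=2u$ when $v=u$), so not every vertex of $U$ can have degree at least $2$ and a degree-$0$ vertex $b\in U$ exists. If instead every vertex of degree $\ge 4$ lies in $V$, suppose for contradiction that every vertex of $V$ has degree at least $2$; then $\sum_{x\in V}\deg_L(x)=2v+2d(\mathcal{P})\ge 2v+4$, contradicting $\sum_{x\in V}\deg_L(x)=\ell\le 2v+2$. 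Hence some vertex of $V$ has degree $0$, supplying $b$. Thus a $d$-reducing switch is always available, and the iteration runs until $d=1$. This case analysis---in particular verifying the boundary inequality $\ell\le 2v+2\le 2u$ and the degree-sum identity $\sum_{x\in V}\deg_L(x)=2v+2d(\mathcal{P})$---is the step I expect to require the most care.

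Finally, the bound $\lfloor \ell/2\rfloor-1$ holds automatically for the terminal leave $L'$, and this is precisely where the multigraph case departs from Horsley's $\lfloor \ell/4\rfloor-1$. The leave $L'$ has a single vertex of degree $4$ and all others of degree $0$ or $2$, so its non-trivial components consist of one component $K$ through the degree-$4$ vertex together with a family of cycles. Since $\l K_{v,u}$ is bipartite, the smallest cycle is a $2$-cycle and the smallest possible $K$ is two $2$-cycles sharing the degree-$4$ vertex, contributing at least $2$ and at least $4$ edges respectively; as $\ell$ is even (being a sum of even vertex-degrees over one part), $j$ non-trivial components force $\ell\ge 4+2(j-1)$, i.e.\ $j\le \ell/2-1=\lfloor\ell/2\rfloor-1$. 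Combining this structural bound with the count $k_0+d(\mathcal{P}_0)-1$ from the switching process yields the claimed $\min(\{k_0+d(\mathcal{P}_0)-1,\lfloor\ell/2\rfloor-1\})$.
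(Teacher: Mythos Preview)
Your proposal is correct and follows essentially the same approach as the paper: iterate Lemma~\ref{lemma3.4} exactly $d(\mathcal{P}_0)-1$ times, each time pairing a vertex of degree at least $4$ with an isolated twin in the same part, track that components grow by at most one per step, and obtain the $\lfloor\ell/2\rfloor-1$ bound from the cycle decomposition of the terminal leave. Your case split on $U$ versus $V$ to locate the isolated vertex is more explicit than the paper's sketch (``such vertices $a$ and $b$ exist \ldots\ based on the number of edges in $L$''); just note that in the $U$-case you are implicitly using $\deg_L(a)\ge 4$ to push $\sum_{x\in U}\deg_L(x)\ge 2u+2>\ell$, since $\ell\le 2u$ alone does not preclude every $U$-vertex having degree exactly $2$.
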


\begin{proof}
Let $d=d(\mathcal{P}_0)$. At minimum, Lemma~{\normalfont\ref{lemma3.4}} must be applied $d-1$ times to form the required packing from $\mathcal{P}_0$ into an $(M)^*$-cycle packing with a leave containing exactly one vertex of degree $4$ and all other vertices of degree $2$ or $0$.

Let $\mathcal{P}_0,\mathcal{P}_1,\ldots,\mathcal{P}_{d-1}$ be a sequence of $(M)^*$-cycle packing of $\l K_{v,u}$ formed in the following inductive manner. The $(M)^*$-cycle packing $\mathcal{P}_{i+1}$ with leave $L_{i+1}$ and $k_{i+1}$ components is formed from $\mathcal{P}_i$ by choosing one isolated vertex $a$ in the leave of $\mathcal{P}_i$ and a twin vertex $b$ (by choosing vertices in the same part as described in the proof to Lemma~{\normalfont\ref{lemma3.1}}) of degree $4$ or more in the leave of $\mathcal{P}_i$. Such vertices $a$ and $b$ exist (depending on whether or not all the vertices of degree at least $4$ are in the same part) based on the number of edges in $L$, and based on the fact that any $(M)^*$-cycle packing in the sequence given above save for $\mathcal{P}_{d-1}$ has either a single vertex of degree at least $6$ or at least $2$ vertices of degree $4$. 

Now it remains to show that $k_{d-1}\leq \min(\{k_0+d-1,\left\lfloor \frac{\ell}{2}\right\rfloor-1)$.
It is clear that $L_{d-1}$ has a decomposition into $k_{d-1}+1$ cycles since the largest degree occurs at a single vertex in $L_{d-1}$ with degree $4$. Each of these cycles have length at least $2$ which shows $\ell\geq 2(k_{d-1}+1)$, thus $k_{d-1}\leq \left\lfloor\frac{\ell}{2}\right\rfloor-1$. By Property $(ii)$ of Lemma~{\normalfont\ref{lemma3.4}}, $k_{i+1}\leq k_i+1$ for each $i\in\{0,1,\ldots,d-2\}$ and so $k_{d-1}\leq k_0+d-1$, attaining the desired conclusion.
\end{proof}

\begin{lemma}\label{multiBipartitePacking}
Suppose there exists an $(M,h,m,m')^*$-cycle decomposition of $\l K_{v,u}$, where $v\leq u$. If 
\begin{itemize}
\item $m+m'\leq h$, 
\item $m+m'+h\leq 2v+2$ if $v<u$, and 
\item $m+m'+h\leq 2v$ if $v=u$, 
\end{itemize}
then there exists an $(M,h,m+m')^*$-cycle decomposition of $\l K_{v,u}$.
\end{lemma}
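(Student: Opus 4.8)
The plan is to treat the cycles of lengths in $M$ (together with the $1$-factor, in the odd case) as an $(M)^*$-cycle packing $\mathcal{P}_0$ of $\l K_{v,u}$ whose leave is the edge-disjoint union $L_0 = C_h \cup C_m \cup C_{m'}$, where $C_h$, $C_m$, $C_{m'}$ are the $h$-, $m$-, and $m'$-cycles of the given decomposition. This leave has size $\ell = h + m + m'$, and the hypotheses give exactly $\ell \le 2v + 2$ when $v < u$ and $\ell \le 2v$ when $v = u$. The goal is to reshape $L_0$, leaving $\mathcal{P}_0$ untouched, into a leave that decomposes into a single $h$-cycle and a single $(m+m')$-cycle; adjoining these two cycles to $\mathcal{P}_0$ then produces the desired $(M, h, m+m')^*$-cycle decomposition. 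Both target lengths are even and sum to $\ell$, so the only obstruction is structural.

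First I would dispose of the case in which $C_h$, $C_m$, $C_{m'}$ are pairwise vertex-disjoint. Here $L_0$ is $2$-regular on its support with no vertex of degree $4$. Since any two vertices of $\l K_{v,u}$ lying in the same part are twin, I can choose $\alpha \in V(C_m)$ and $\beta \in V(C_{m'})$ in a common part and perform a single $(\alpha,\beta)$-switch via Lemma~\ref{cycleSwitching}; crossing two disjoint cycles at twin vertices merges them into one $(m+m')$-cycle, so the new leave is $C_h \cup C_{m+m'}$ and the proof is finished immediately (the $2$-cycle subcases, where $\alpha$ or $\beta$ has a repeated neighbour, are checked identically).

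In the remaining case $L_0$ has a vertex of degree at least $4$, and the main line of attack is Lemma~\ref{lemma3.5} followed by Lemma~\ref{lemma3.3}. Applying Lemma~\ref{lemma3.5} to $\mathcal{P}_0$ (legitimate because $\ell$ meets the required size bound) yields an $(M)^*$-cycle packing whose leave $L'$ has exactly one vertex of degree $4$, every other vertex of degree $2$ or $0$, and at most $\min(k_0 + d(\mathcal{P}_0) - 1,\ \lfloor \ell/2\rfloor - 1)$ non-trivial components, say $k$. I would then invoke Lemma~\ref{lemma3.3} with $m_1 = h$ and $m_2 = m + m'$, which returns an $(M, h, m+m')^*$-cycle decomposition provided $m_1, m_2 \ge \max(2, k+1)$. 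Since $m + m' \le h$ and $m + m' \ge 4$, this reduces to the single inequality $k \le m + m' - 1$.

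Establishing $k \le m + m' - 1$ is the crux and the main obstacle. Writing $n_0$ for the number of non-isolated vertices of the leave, one has $d(\mathcal{P}_0) = \ell - n_0$, so $k_0 + d(\mathcal{P}_0) = \ell - n_0 + k_0$ is precisely the cyclomatic number of the leave; as $C_h$ alone contributes $n_0 \ge h$ distinct vertices, this gives $k_0 + d(\mathcal{P}_0) \le (m+m') + k_0$, which for a connected leave can reach $m + m' + 1$, one more than we can afford. In the multigraph setting, unlike the simple-graph setting of \cite{Horsley} where every cycle has length at least $4$, the companion bound $\lfloor \ell/2\rfloor - 1$ is too weak to compensate whenever $h > m + m'$, and this is exactly why the hypothesis must be strengthened from $m + m' \le 3h$ to $m + m' \le h$. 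The remedy is a preprocessing step: while the current leave has cyclomatic number exceeding $m + m'$ it still contains a non-cut vertex of degree at least $4$, and the size bound forces an isolated vertex to exist in one of the parts; shifting two units of degree from the former to the latter via Lemma~\ref{lemma3.4}, case~$(i)$, lowers the cyclomatic number by one while preserving the number of non-trivial components. Iterating brings $k_0 + d(\mathcal{P}_0)$ down to at most $m + m'$ before Lemma~\ref{lemma3.5} is applied (and if the reductions ever exhaust all degree-$4$ vertices, the leave has become a union of disjoint cycles and I merge directly by a crossing switch as in the second paragraph). The delicate points—locating the required non-cut vertex and isolated vertex at each step, and handling the tight boundary configurations $\ell = 2v + 2$ and $\ell = 2v$ together with the smallest cases $m + m' = 4$—are exactly where the argument parallels, and must occasionally refine, the casework of Lemma~3.6 in \cite{Horsley}.
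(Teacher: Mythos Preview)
Your proposal is correct and follows essentially the same route as the paper: remove $C_h$, $C_m$, $C_{m'}$ to obtain an $(M)^*$-packing whose leave has size $\ell=h+m+m'$, then invoke the multigraph analogues of Horsley's Lemmas~3.4, 3.5, and 3.3 (the paper's Lemmas~\ref{lemma3.4}, \ref{lemma3.5}, \ref{lemma3.3}) and defer the residual casework to the proof of Theorem~3.6 in \cite{Horsley}. Your explicit accounting of the cyclomatic quantity $k_0+d(\mathcal{P}_0)$ and the one-step preprocessing via Lemma~\ref{lemma3.4}$(i)$ spells out a detail that the paper's sketch (its claimed bound $k\le m+m'-1$) glosses over; note, however, that a single $(\alpha,\beta)$-switch on two disjoint cycles may yield an $(m,m')$-chain rather than an $(m+m')$-cycle, in which case you simply fall through to the main case with one vertex of degree~$4$.
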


\begin{proof}
This lemma was proved for simple graphs in {\normalfont\cite{Horsley}}, but the same proof can be generalized for multigraphs as long as the leave does not contain a $2$-cycle (with the additional change that $m+m'\leq h$). Further, we can use Lemmas~{\normalfont\ref{lemma3.3}}, {\normalfont\ref{lemma3.4}}, and {\normalfont\ref{lemma3.5}} as was done in {\normalfont\cite{Horsley}} to prove this lemma with just the obvious generalizations for the first condition in this lemma. This modification  is a direct result of $2$-cycles being a possible cycle in $\l K_{v,u}$. To satisfy the conditions in Lemma~\ref{lemma3.3}, $m+m'$ and $h$ must be greater than $k$, the number of non-trivial components in the leave but when we use Lemma~\ref{lemma3.5}, $k\leq \min(\{m+m'-1,\lceil\frac{m+m'+h}{2}\rceil-1\})$, hence our first condition. With this explanation above for the modified first condition of the lemma, follow the proof in Theorem 3.6 of \cite{Horsley} to attain the desired result.
\end{proof}

The lemma above should come as no surprise since Lemma~{\normalfont{\ref{multiBipartitePacking}} is a multigraph version of Theorem~{\normalfont\ref{lemma3.6H}} (except it turns out that when $2$-cycles are involved, we require $m+m'\leq h$ instead of $m+m'\leq 3h$).

\section{Main Theorem}\label{mainTheorem}

To attain our main result, we first need this next result from {\normalfont\cite{Horsley}}. 

\begin{theorem}\label{bipartitePacking}
{\normalfont\cite{Horsley}} Let $a$ and $b$ be positive integers such that either $a$ and $b$ are even or $a=b$, and let $K^*_{a,b}$ be the graph $K_{a,b}$ if $a$ and $b$ are even and the graph $K_{a,b}-I$ if $a=b$ and $a$ is odd where $I$ is a $1$-factor. If $m_1,m_2,\ldots,m_t$ are even integers such that $4\leq m_1\leq m_2\leq\cdots \leq m_t\leq \min(a,b,3m_{t-1})$ and $m_1+m_2+\cdots+m_t=|E(K_{a,b}^*)|$, then there is a cycle decomposition of $K_{a,b}^*$ into cycles of lengths $m_1,m_2,\ldots,m_t$.
\end{theorem}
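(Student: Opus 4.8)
The plan is to prove Theorem~\ref{bipartitePacking} by transforming a near-uniform cycle decomposition into the target one through a sequence of local moves, each realized by the switching result Theorem~\ref{lemma2.1} together with the chain-splitting result Theorem~\ref{lemma2.6H}. Throughout write $n=\min(a,b)$, and record that the hypotheses give a sequence of even integers with $4\le m_1\le\cdots\le m_t\le n$, with $\sum_i m_i=|E(K^*_{a,b})|$, and with the ratio bound $m_t\le 3m_{t-1}$. The induction is on the \emph{imbalance} of the sequence, for which a convenient measure is $m_t-m_1$ (or $\sum_i(m_i-\bar m)^2$); the goal is to reduce any valid target to the base case of (near-)uniform lengths.

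\textbf{Base case.} When all $m_i$ equal a single even value $m$, the required object is a uniform $m$-cycle decomposition of $K_{a,b}$ (when $a,b$ are even) or of $K_{a,a}-I$ (when $a=b$ is odd); these are supplied by Sotteau's theorem and its standard $K_{a,a}-I$ analogue, whose divisibility and size hypotheses follow from $m\mid\sum_i m_i=|E(K^*_{a,b})|$ and $m\le n$. This is the only point at which the two graph families are handled separately. Because $|E|/t$ need not be an even integer, the genuine base of the induction is slightly larger, namely sequences taking at most two consecutive even values; that two-length case must be produced either by a short sequence of merges from a uniform decomposition of an adjacent length or by a direct two-length argument, and pinning it down cleanly is the first technical point.

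\textbf{Inductive step.} If the sequence is not near-uniform then $m_t-m_1\ge 4$. I would replace the extreme pair $(m_1,m_t)$ by $(m_1+2,\,m_t-2)$ and re-sort, obtaining a sequence $M'$ of strictly smaller imbalance that is still even-valued, still lies in $[4,n]$, still sums to $|E(K^*_{a,b})|$, and — the delicate point — still satisfies the ratio bound. By the induction hypothesis there is a decomposition realizing $M'$. To recover a decomposition realizing $M$, I would apply one \emph{de-balancing} move to the $(m_1+2)$-cycle and the $(m_t-2)$-cycle: first use an $(\alpha,\beta)$-switch (as in the proof of Theorem~\ref{lemma3.2H}) to join these two cycles into a single $(m_1+2,\,m_t-2)$-chain while leaving every other cycle fixed, and then apply Theorem~\ref{lemma2.6H} to re-split that chain into an $m_1$-cycle and an $m_t$-cycle. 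This last step is legitimate precisely because $m_1,m_t\ge 4$ are even with $m_1+m_t\le 2n$, a bound immediate from $m_t\le n$. The auxiliary results Theorems~\ref{lemma3.4H} and \ref{lemma3.5H} provide the degree-rebalancing and component-reduction needed to guarantee that the two chosen cycles can always be isolated for this switch.

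\textbf{Main obstacle.} The crux is not any single switch but the global reachability argument: one must order the rebalancing moves so that \emph{every} intermediate sequence remains valid, and in particular continues to satisfy $m_t\le 3m_{t-1}$. After a balancing move the new largest and second-largest lengths may both shift, so naively balancing the extreme pair can violate the ratio bound even when the target satisfies it; the remedy is to balance an interior pair, or to route through the near-uniform base along a path of sequences that all obey the bound. Showing such a path always exists — equivalently, that the ratio condition $m_t\le 3m_{t-1}$ exactly characterizes the sequences reachable from near-uniform by these moves — is where the substance of the proof lies, and is precisely what the hypothesis $m_t\le 3m_{t-1}$ is engineered to guarantee (mirroring the role of the $m+m'\le 3h$ condition in the merge lemma Theorem~\ref{lemma3.6H}).
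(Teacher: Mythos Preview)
This theorem carries no proof in the present paper: it is quoted verbatim from Horsley's paper (the \texttt{\textbackslash cite\{Horsley\}} tag is the only ``proof'' given here), so there is no in-paper argument to compare against. What can be said is that your outline runs in the \emph{opposite} direction to Horsley's actual proof and to the proof of the main result in this paper (Theorem~\ref{bipartiteMaxPacking}): Horsley starts from a decomposition into $4$-cycles (Sotteau) together with a single longest cycle of length $m_t$, and then repeatedly \emph{merges} small cycles upward using the lemma recorded here as Theorem~\ref{lemma3.6H}, with $h=m_t$ held fixed as the helper cycle. In that scheme each merge has $m+m'=m_i\le m_t\le 3m_t=3h$, so the ratio hypothesis is consumed exactly once, at the top, and never has to be re-verified along the way. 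Your scheme instead starts near-uniform and \emph{de-balances}, which forces you to check $m_t'\le 3m_{t-1}'$ at every intermediate sequence---precisely the obstacle you flag and do not resolve.

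As written, the proposal is a plan rather than a proof: you explicitly leave open (i) the two-value base case and (ii) the reachability claim that every valid target sequence is connected to a near-uniform one by a path of valid sequences under $\pm 2$ balancing moves. Item (ii) is the entire difficulty; without it the induction does not close. There is also a smaller technical slip: to turn two vertex-disjoint cycles in the leave into a $(p,q)$-chain you cannot invoke Theorem~\ref{lemma3.2H} directly, since that lemma presupposes the leave already has a vertex of degree~$4$; you would first need a switch (via Theorem~\ref{lemma3.4H} or a direct $(\alpha,\beta)$-switch between the two cycles) to create that degree-$4$ vertex, and you should check the size constraint $m_1+m_t\le 2\min(a,b)+2$ needed downstream for Theorem~\ref{lemma3.1H}. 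If you want a route that avoids the ratio-maintenance problem entirely, mimic the merge-up strategy of Theorem~\ref{bipartiteMaxPacking}: fix $h=m_t$ once and build each $m_i$ by absorbing $4$-cycles.
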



Let $M_1=m_1',m_2',\ldots,m_s'$ and $M_2=m_1'',m_2'',\ldots,m_t''$ be non-decreasing sequences of positive integers.
We say $M_1<M_2$ if and only if $s>t$ or $s=t$ and $m_k'<m_k''$ where $k$ is the smallest positive integer such that $m_k'\neq m_k''$. Thus we have an ordering to the sequences representing the cycle sizes. Based on the definition, the next lemma proves our main result for the smallest cases. 
Recall that $\nu_k(M)$ is the number of times $k$ appears in the sequence $M$. 

\begin{lemma}\label{smallBipartiteDecomp}
Let $\l$, $u$, and $v$ be positive integers satisfying the necessary conditions found in Theorem~{\normalfont\ref{Nconditions}}. If $\l$ is even, let $M_1=2,2,\ldots,2,4,4,\ldots,4,m_t$ be a non-decreasing sequence of $t$ terms such that $\nu_4(M)=k$ and $\nu_2(M)=t-k-1$. If $\l$ is odd, let $M_2=2,2,\ldots,2,4,4,\ldots,4,m_{t-1},m_t$ be a non-decreasing sequence of $t$ terms such that $\nu_4(M)=k$ and $\nu_2(M)=t-k-2$, and $m_t\leq \min(\{v,u,3m_{t-1}\})$ satisfies the conditions in Theorem~{\normalfont\ref{Nconditions}}. Regardless of the value of $\lambda$, let $k$ be as small as possible and still satisfy the necessary conditions in Theorem~{\normalfont\ref{Nconditions}}. Then there exists an $(M_1)$-cycle decomposition $\mathcal{P}$ of $\l K_{v,u}$ when $\l$ is even and there exists an $(M_2)$-cycle decomposition $\mathcal{P}$ of $\l K_{v,u}$ when $\l$ is odd.

\end{lemma}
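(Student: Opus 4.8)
The plan is to treat the cases $\l$ even and $\l$ odd separately, since the parity of $\l$ forces structurally different constructions. Throughout I assume $v\le u$, which is harmless by the symmetry of $\l K_{v,u}$. For even $\l$ I will build the whole decomposition explicitly inside the multigraph; for odd $\l$ I will peel off one simple copy of $K_{v,u}$, decompose it with Theorem~\ref{bipartitePacking}, and send the remaining even part to $2$-cycles.

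\emph{Even $\l$.} If $m_t=2$ the sequence is all $2$-cycles and one simply pairs the $\l$ (even) parallel edges on each pair. So assume $m_t\ge 4$ and set $\ell=m_t/2$; the minimality of $k$ forces $k=\ell-1$ (this is exactly what condition $(c)$ of Theorem~\ref{Nconditions} demands). Choose $v_1,\dots,v_\ell$ in $V$ and $u_1,\dots,u_\ell$ in $U$ (possible since $\ell=m_t/2\le\min(\{v,u\})$ by condition $(a)$), let $C=(v_1,u_1,v_2,u_2,\dots,v_\ell,u_\ell)$ be the target $m_t$-cycle, and for $1\le i\le \ell-1$ let $Q_i=(v_1,u_i,v_{i+1},u_{i+1})$ be a $4$-cycle. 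A direct check shows that in the $\mathbb{Z}_2$ cycle space of $K_{v,u}$ one has $Q_1+\cdots+Q_{\ell-1}=C$; equivalently, every pair of vertices is covered an even number of times by $C$ together with the $Q_i$, and one verifies that no pair is covered more than twice. Since $\l\ge2$, I can realize $C$ and $Q_1,\dots,Q_{\ell-1}$ as pairwise edge-disjoint subgraphs of $\l K_{v,u}$; the leftover multigraph then has even multiplicity on every pair and so decomposes into $2$-cycles. Counting edges shows the number of $2$-cycles is exactly $\nu_2(M_1)$, giving the $(M_1)$-decomposition. Note this case never uses Theorem~\ref{bipartitePacking}, which matters because its $m_t\le 3m_{t-1}$ hypothesis would fail once the second-largest cycle is only a $4$-cycle.

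\emph{Odd $\l$.} Here condition $(b)$ gives $v\equiv u\equiv 0\pmod 2$, so $K_{v,u}^*=K_{v,u}$ and Theorem~\ref{bipartitePacking} is available on a single simple copy. Write $\l K_{v,u}=K_{v,u}\cup(\l-1)K_{v,u}$. Since $vu$ and $m_{t-1}+m_t$ are even, $vu-m_{t-1}-m_t\equiv 0$ or $2\pmod 4$, and the minimality of $k$ (forced by condition $(d)$) gives $4k+m_{t-1}+m_t=vu$ or $vu+2$ respectively. If the value is $vu$, I decompose the simple copy into $\{4^{(k)},m_{t-1},m_t\}$ by Theorem~\ref{bipartitePacking} — the hypotheses $4\le m_t\le\min(\{v,u,3m_{t-1}\})$ are precisely the conditions assumed here — and decompose $(\l-1)K_{v,u}$ into $2$-cycles, so the counts match $\nu_2(M_2)=(\l-1)vu/2$. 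If the value is $vu+2$, I instead decompose the simple copy into $\{4^{(k-1)},m_{t-1},m_t+2\}$ by Theorem~\ref{bipartitePacking} and the even part into $2$-cycles; then I regard everything except the $(m_t+2)$-cycle and one $2$-cycle incident with it as a packing whose leave is an $(m_t+2,2)$-chain, and apply Lemma~\ref{lemma2.6} with $m_1=4$, $m_2=m_t$ to re-decompose that chain into a $4$-cycle and an $m_t$-cycle. This turns $\{4^{(k-1)},m_t+2,2\}$ into $\{4^{(k)},m_t\}$, consuming exactly one $2$-cycle, which is precisely why $\nu_2(M_2)=(\l-1)vu/2-1$ holds in this subcase.

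The main obstacle is the length bookkeeping for Theorem~\ref{bipartitePacking} in the odd ``$+2$'' subcase: inflating $m_t$ to $m_t+2$ is only legal while $m_t+2\le\min(\{v,u,3m_{t-1}\})$, and this can fail at the extremes $m_t=\min(\{v,u\})$ or $m_t=3m_{t-1}$. In those extremal cases I would instead absorb the surplus $2$ into a smaller cycle — inflating some $4$-cycle to a $6$-cycle (when $k\ge2$ and $\min(\{v,u\})\ge 6$) and splitting it back as $4+4$ via Lemma~\ref{lemma2.6}, or inflating $m_{t-1}$ instead — and check that the arithmetic (in particular $vu\equiv0\pmod4$ and the parities of $m_{t-1},m_t$) always leaves at least one legal choice, verifying the finitely many genuinely small configurations directly.
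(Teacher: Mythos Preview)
Your even-$\lambda$ case and your odd-$\lambda$ case when $4\mid vu-m_{t-1}-m_t$ are correct and essentially coincide with the paper's argument: the paper builds the same $m_t$-cycle and the same family of $4$-cycles $(x_0,x_{2i-1},x_{2i},x_{2i+1})$ you call $Q_i$, observes that every pair is covered $0$ or $2$ times, and fills the rest with $2$-cycles.

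Where you diverge is the odd ``$+2$'' subcase. The paper does \emph{not} inflate $m_t$; instead it applies Theorem~\ref{bipartitePacking} to get a $(4^{(k-2)},m_{t-1},m_t)$-packing of $K_{v,u}$ whose leave is a single $6$-cycle $(x_0,\dots,x_5)$, and then writes that $6$-cycle as the edge-union of the two $4$-cycles $(x_0,x_1,x_2,x_3)$ and $(x_0,x_3,x_4,x_5)$, which overlap in the pair $x_0x_3$. The extra copy of $x_0x_3$ is taken from $(\lambda-1)K_{v,u}$, so one $2$-cycle is consumed and the counts match. This is self-contained and never needs Lemma~\ref{lemma2.6}; more importantly, it never pushes any cycle length above $m_t$, so the hypotheses of Theorem~\ref{bipartitePacking} are automatically met.

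Your route---inflate $m_t$ to $m_t+2$, form an $(m_t+2,2)$-chain with a $2$-cycle from the even part, then split via Lemma~\ref{lemma2.6}---is valid in the interior but genuinely breaks at the boundary $m_t=v$ or $m_t=3m_{t-1}$, exactly as you note. Your fallback (``inflate some $4$-cycle to a $6$-cycle'') is in fact the paper's trick in disguise, but you leave it as a sketch plus a promise to check ``finitely many genuinely small configurations.'' That promissory note is the gap: as written, the boundary cases are not dispatched. If you simply adopt the $6$-cycle argument as the primary method for the $+2$ subcase, the edge cases evaporate (one checks that in the $+2$ subcase $m_{t-1}+m_t\equiv 2\pmod 4$, so $m_{t-1}=m_t=4$ cannot occur and $v\ge m_t\ge 6$ always holds, so a $6$-cycle fits), and you never need Lemma~\ref{lemma2.6} here at all.
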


\begin{proof}
Let $G=\l K_{v,u}$. Suppose $\l$ is even. Then $\l vu=m_t+4k+2(t-k-1)$ implies $m_t=\l vu-2k-2t+2$. So 
$$k+\frac{m_t-2}{2}+t=\frac{\l}{2} vu \geq t+m_t-2.$$
The last inequality is by Theorem~\ref{anyGraphNecessary}$(c)$. It follows that
$$k\geq m_t-2-\frac{m_t-2}{2}=\frac{2m_t-4-m_t+2}{2}=\frac{m_t-2}{2}.$$
Since we want to minimize $k$, there are $k=\frac{m_t-2}{2}$ $4$-cycles in $\mathcal{P}$. Let $C_{m_t}=(x_0,x_1,\ldots,x_{m_t-1})$ be a cycle of length $m_t$. Also, let $C_{\frac{m_t+2}{2}+k}=(x_0,x_{2k-1},x_{2k},x_{2k+1})$ for $k\in\{1,2,\ldots,\frac{m_t-2}{2}\}$ be cycles of length $4$. Then the resulting subgraph $G'=C_{\frac{m_t+2}{2}}\cup C_{\frac{m_t+4}{2}}\cup\cdots\cup C_{m_t}$ is an even subgraph of $G$ where for each pair of vertices $x$ and $y$ in $G$, $\mu_{G'}(xy)\in \{0,2\}$. It is clear that the remainder of the graph can be decomposed into $2$-cycles. Thus, the required cycle decomposition exists.

Suppose $\l$ is odd. By Theorem~\ref{anyGraphNecessary}$(d)$ we need $k=\lceil \frac{vu-m_t-m_{t-1}}{4}\rceil$ $4$-cycles. If $4$ divides $vu-m_t-m_{t-1}$ then by Theorem~{\normalfont\ref{bipartitePacking}} there exists a $(4,4,\ldots,4,m_{t-1},m_t)$-cycle decomposition of $K_{v,u}$ and it is clear that there is a decomposition of $(\l-1)K_{v,u}$ into $2$-cycles. Suppose $4$ does not divide $vu-m_t-m_{t-1}$. Since $\l v\equiv \l u\equiv 0\pmod{2}$ and $\l$ is odd, $v\equiv u\equiv 0\pmod{2}$, and so there exists a $(4,4,\ldots,4,m_{t-1},m_t)$-cycle packing $\mathcal{P}$ of $K_{v,u}$ with a leave that is a $6$-cycle, $C_0=(x_0,x_1,\ldots,x_5)$ by Theorem~{\normalfont\ref{bipartitePacking}}. Then $C_1=(x_0,x_1,x_2,x_3)$ and $C_2=(x_0,x_3,x_4,x_5)$ are the remaining $4$-cycles and thus $\mathcal{P}\cup \{E(C_1)\cup E(C_2)\}$ is an $(M_2)$-cycle decomposition of $K_{v,u}$ with an additional $2$-cycle between $x_0$ and $x_3$. So the remaining edges of $\l K_{v,u}$ can be decomposed into $2$-cycles.
\end{proof}

\begin{theorem}\label{bipartiteMaxPacking}
Let $v$, $u$, and $\l$ be positive integers such that $v,u\geq 5$, $v\leq u$, and $\l v\equiv \l u\equiv 0\pmod{2}$. Let $M=m_1,m_2,\ldots,m_t$ be a non-decreasing sequence of positive even integers such that $2\leq m_1$. An $(M)$-cycle decomposition of $\l K_{v,u}$ exists if all of the following hold:
\begin{itemize}
\item $m_t\leq 3m_{t-1}$, 
\item $t\leq \frac{\l}{2}vu-m_t+2$ if $\l$ is even,
\item $2\nu_2(M)\leq (\l-1)vu$,
\item $m_{t-1}+m_t\leq 2v+2$ if $v<u$,
\item $m_{t-1}+m_t\leq 2v$ if $v=u$, and 
\item $m_1+ m_2+\cdots+ m_t= \l vu$.
\end{itemize}
\end{theorem}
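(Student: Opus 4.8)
The plan is to prove this theorem by induction on the sequence $M$ under the ordering defined just before Lemma~\ref{smallBipartiteDecomp}, where larger sequences have either fewer terms or lexicographically larger entries. The base case is handled by Lemma~\ref{smallBipartiteDecomp}, which establishes the decomposition for the minimal sequences $M_1$ and $M_2$ (those consisting of as many $2$-cycles as the necessary conditions permit, together with $4$-cycles and the one or two longest cycles). For the inductive step, I would take an arbitrary admissible sequence $M=m_1,m_2,\ldots,m_t$ satisfying all six listed conditions and show that it can be reached from a strictly smaller admissible sequence by \emph{splitting} one cycle into two. Concretely, given $M$, I want to find two entries $m$ and $m'$ in $M$ and replace them by a single entry $h=m+m'$, producing a shorter sequence $M'$ that still satisfies all the necessary conditions; by induction $\l K_{v,u}$ has an $(M')$-cycle decomposition, and then Lemma~\ref{multiBipartitePacking} lets me split the $h$-cycle back into an $m$-cycle and an $m'$-cycle to recover an $(M)$-decomposition.

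First I would verify that the replacement $\{m,m'\}\mapsto h=m+m'$ yields a sequence $M'$ that is genuinely smaller in the ordering (it has one fewer term, so $M<M'$) and that $M'$ still satisfies the hypotheses of the theorem: the total edge count $\l vu$ is preserved, the parity and divisibility conditions are unaffected, and I must check that the new largest entry does not violate $m_t\le 3m_{t-1}$ or the bounds $m_{t-1}+m_t\le 2v+2$ (resp.\ $2v$). The delicate point is the constraint $m+m'\le h$ required by Lemma~\ref{multiBipartitePacking}: since $h=m+m'$ this is automatically satisfied, but I must also respect the bound $m+m'+h\le 2v+2$ (resp.\ $2v$), i.e.\ $2h\le 2v+2$, which constrains which pairs I am allowed to merge. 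The natural choice is to merge the two \emph{smallest} cycles, or to peel off cycles from the large end in a controlled way, so that the merged cycle $h$ stays small enough to invoke Lemma~\ref{multiBipartitePacking}.

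The main obstacle I anticipate is the careful bookkeeping needed to guarantee that at every stage a \emph{legal} merge exists — one producing an admissible shorter sequence for which the induction hypothesis applies and for which the size constraints of Lemma~\ref{multiBipartitePacking} hold. In particular, when $\l$ is odd the condition $2\nu_2(M)\le(\l-1)vu$ limits how many $2$-cycles may appear, so merges must be arranged to keep the number of $2$-cycles within range; and when $\l$ is even the condition $t\le\frac{\l}{2}vu-m_t+2$ must continue to hold for $M'$. I would organize the argument by reducing $M$ step by step toward the base sequence of Lemma~\ref{smallBipartiteDecomp}, at each step merging an appropriately chosen pair so that the reduced sequence remains admissible; since the ordering is a well-ordering on finite sequences with fixed sum $\l vu$, this process terminates at the base case, completing the induction. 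The separate treatment of the even and odd $\l$ cases, mirroring the split in Lemma~\ref{smallBipartiteDecomp}, will be the second source of case analysis to manage.
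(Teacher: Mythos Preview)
You have misread Lemma~\ref{multiBipartitePacking}, and this derails the whole argument. The lemma does \emph{not} merge an $m$-cycle and an $m'$-cycle into a single $h$-cycle with $h=m+m'$. It takes a decomposition containing \emph{three} designated cycles of lengths $h$, $m$, $m'$ and returns one containing \emph{two} designated cycles of lengths $h$ and $m+m'$: the $h$-cycle is a separate witness cycle that is preserved, and the constraint $m+m'\le h$ is genuine, not vacuous. In the paper's application $h=m_t$ throughout; the largest cycle sits untouched while smaller cycles are joined in its presence. Your claim that ``$h=m+m'$ so $m+m'\le h$ is automatically satisfied'' reveals exactly this misreading, and without it you have no mechanism for splitting cycles at all---the lemma only joins.

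This error also inverts your induction. You propose to pass from $M$ to a shorter sequence $M'$ obtained by merging two entries, and you correctly observe that $M<M'$ in the paper's ordering; but then the inductive hypothesis applies to sequences \emph{smaller} than $M$, not larger, so you cannot assume an $(M')$-decomposition exists. The correct direction is the reverse: given $M$, pick some $m_i$ with $i<t$ (so $m_i\le m_t$) and split it as $m_i=m+m'$ with $m\in\{2,4\}$, obtaining a longer---hence smaller---sequence $M''$. By induction an $(M'')$-decomposition exists, and Lemma~\ref{multiBipartitePacking} with $h=m_t$ (which is still present in $M''$) joins $m$ and $m'$ back into $m_i$. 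The conditions $m+m'\le h$ and $m+m'+h\le 2v+2$ then reduce to $m_i\le m_t$ and $m_i+m_t\le 2v+2$, both of which follow from the hypotheses. This is precisely how the paper proceeds, phrased as a direct construction: start from the base decomposition of Lemma~\ref{smallBipartiteDecomp} and repeatedly join $2$- and $4$-cycles (always with the fixed witness $h=m_t$) to assemble each $m_i$ in turn.
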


\begin{proof}
Let $v\leq u$. If $\l$ is even, by Lemma~{\normalfont\ref{smallBipartiteDecomp}}, for any $m_t\in\{4,6,\ldots,2v-2\}$ there exists a $(2,2,\ldots,2,4,4,\ldots,4,m_t)$-cycle decomposition $\mathcal{P}$ of $\l K_{v,u}$ where the number of $4$-cycles is as small as possible to satisfy the necessary conditions in Lemma~{\normalfont\ref{Nconditions}}. 
If $\l$ is odd, by Lemma~{\normalfont\ref{smallBipartiteDecomp}} there exists a $(2,2,\ldots,2,4,4,\ldots,4,m_{t-1},m_t)$-cycle decomposition $\mathcal{P}$ of $\l K_{v,u}$ where the sum of the edges in the $4$-cycles and $m_{t-1}$ and $m_t$ is $vu$ or $vu+2$.
Since $m_{i}\leq m_t$ for all $i\in\{1,2,\ldots,t-1\}$  when $\l$ is even or $i\in \{1,2,\ldots,t-2\}$ when $\l$ is odd, and by Lemma~{\normalfont\ref{multiBipartitePacking}} (with $m=4$ or $m=2$, $m'=n_i-4$ or $m'=n_i-2$ where $n_i\leq m_i$ is an even integer, and $h=m_t$), it follows that cycles can be joined in $\mathcal{P}$ to form an $(m_1,m_2,\ldots,m_t)$-cycle decomposition of $\l K_{v,u}$. 

\end{proof}


\bibliographystyle{plain}
\bibliography{vdec}
\end{document}